\newcommand{\E}{\mathbb{E}}
\newcommand{\Z}{\mathbb{Z}}
\newcommand{\diam}{\text{diam}}
\renewcommand{\epsilon}{\varepsilon}
\DeclareMathOperator{\lk}{lk}
\newtheorem{theorem}{Theorem}
\newtheorem{lemma}[theorem]{Lemma}
\theoremstyle{definition}
\title{Complexes of nearly maximum diameter}
\author{Tom Bohman\thanks{Carnegie Mellon University. This work was supported by a grant from the Simons
Foundation (587088, TB)} \
 and Andrew Newman\thanks{Carnegie Mellon University}}
\begin{document}
\maketitle
\begin{abstract}
    The diameter of a strongly connected $d$-dimensional simplicial complex is the diameter of its dual graph. We provide a probabilistic proof of the existence of $d$-dimensional simplicial complexes with diameter $ (\frac{1}{d \cdot d!} - (\log n)^{-\epsilon}) n^d$. Up to the first order term, this is the best possible lower bound for the maximum diameter of a $d$-complex on $n$ vertices as a simple volume argument shows that the diameter of a $d$-dimensional simplicial complex is at most $ \frac{1}{d} \binom{n}{d}$. We also find the right first-order asymptotics for the maximum diameter of a $d$-pseudomanifold on $n$ vertices. 
    
\end{abstract}

\section{Introduction}
The {\em dual graph} of
a $d$-dimensional simplicial complex $X$ is the graph $\mathcal{G}(X) = (V, E)$ where $V$ is the set of $d$-dimensional faces of $X$ and $\{\sigma_1, \sigma_2\}$ is an edge if and only if $|\sigma_1 \cap \sigma_2| = d$, i.e. $\sigma_1$ and $\sigma_2$ meet at a face of codimension 1. If $\mathcal{G}(X)$ is connected we say that $X$ is \emph{strongly connected}. In the case that $X$ is strongly connected the (combinatorial) diameter of $X$ is the graph diameter of $\mathcal{G}(X)$\footnote{It is often the case in papers dealing with diameter questions for complexes that one assumes the complexes are \emph{pure dimensional}. But if $X$ is a $d$-complex $\mathcal{G}(X)$ can only see the pure $d$-part of $X$, so we won't concern ourselves with requiring that that $X$ is pure $d$-dimensional. }. Here we are interested in the question of the maximum possible diameter of a strongly connected $d$-dimensional complex on $n$ vertices. 

Questions about the diameter of arbitrary simplicial complexes was originally motivated by the Hirsch conjecture regarding the diameter of polytopes. The Hirsch conjecture asserted that if $X$ is a $d$-dimensional polytope with $n$ vertices then the diameter of $X$ is at most $n - d$. This conjecture was disproved by Santos in 2010 \cite{SantoHirschConj}, but a weaker version of the conjecture, the polynomial Hirsch conjecture, is still open. The polynomial Hirsch conjecture states that the diameter of a $d$-dimensional polytope with $n$ vertices is at most $p(n, d)$ where $p$ is a polynomial in $n$ and $d$.

Santos surveyed partial results related to the polynomial Hirsch conjecture and provided the first nontrivial lower bound on the diameter of an arbitrary simplicial complex in \cite{SantosDiamterSurvey}. Following Santos' notation we let $H_s(n, d)$ denote the maximum diameter of a $d$-dimensional simplicial complex on $n$ vertices. (Note though that we keep the $d$ parameter as the dimension of the complex, some other papers take $d$ to be the dimension of the complex plus one.) Santos proved
\[ H_s(n,d) \ge C\left(\frac{n}{d+1}\right)^{2(d+1)/3}  \]
for some absolute constant $C$ \cite[Corollary 2.12]{SantosDiamterSurvey}. In particular, 
once the view is broadened from simplicial polytopes to arbitrary simplicial complexes, a polynomial upper bound on the diameter cannot hold.

Santos also noted (see \cite[Corollary 2.7]{SantosDiamterSurvey}) that a simple volume argument gives an upper bound on the diameter of an arbitrary simplicial complex; namely, 
\[H_s(n, d) \leq \frac{n^d}{d \cdot d!}.\]
To prove this upper bound, suppose that we follow an induced path of length $\ell$ in the dual graph of a $d$-dimensional simplicial complex $X$. At the beginning of the path we see a $d$-simplex which has $d+1$ many $(d - 1)$-faces, from there each step of the path reveals a new $d$-simplex and $d$ new $(d - 1)$-faces. Thus after travelling along the path we have found $d+1 + d\ell$ many $(d - 1)$-dimensional faces. However the number of $(d -1)$ faces that a simplicial complex on $n$ vertices can have is at most $\binom{n}{d}$, so 
\[\ell \leq \frac{1}{d}\left(\binom{n}{d} - (d + 1)\right) \leq \frac{n^d}{d \cdot d!}.\]

Since the initial lower bound of Santos, progress has been made to improve the lower bound, but this simple volume upper bound is still the best upper bound that we have. Criado and Santos \cite{CriadoSantos}  showed that for each $d$ there are infinitely many $n$ so that
\[H_s(n, d) \geq \frac{n^d}{(d + 3)^d} - 3.\]
In particular, this shows that for $d$ fixed $H_s(n, d)$ grows like $\Theta(n^d)$, however there is still a gap in terms of $d$. In particular the ratio between the upper bound and this lower bound is exponential in $d$.
The lower bound was further improved using probabilistic techniques by Criado and Newman \cite{CriadoNewman}. They established 
\[H_s(n, d) \geq \frac{n^d}{5e(d+1)^2(d + 1)!}.\]
for $n$ sufficiently large. This therefore improves the ratio between the upper bound and lower bound to a $\Theta(d^2)$ factor. 

Here we settle the question of the maximum value of $H_s(n, d)$ for fixed $d$ up to the first-order term. We prove a new lower bound which asymptotically 
matches the simple volume upper bound as our main theorem.
\begin{theorem}\label{maintheorem}
For $d \geq 2$, $\epsilon < 1/ d^2$ and $n$ large enough there exists a pure $d$-dimensional simplicial complex on $n$ vertices whose dual graph is a path of length at least $(\frac{1}{d \cdot d!} - (\log n)^{-\epsilon}) n^d$. Therefore 
\[ H_s(n,d)  \ge \left(\frac{1}{d \cdot d!} - (\log n)^{-\epsilon}\right) n^d.  \]
\end{theorem}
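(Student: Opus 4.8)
The plan is to realize the desired complex $X$ through an explicit description of its sequence of facets $F_0,F_1,\dots,F_\ell$, that is, of a walk in the Johnson graph $J(n,d+1)$ on the $(d+1)$-subsets of $[n]$, with the two properties that consecutive facets meet in a ridge and that $|F_i\cap F_j|\le d-1$ whenever $|i-j|\ge 2$. A short check shows that these conditions alone force the dual graph of $X$ to be precisely the path $F_0-F_1-\cdots-F_\ell$: non-consecutive facets are non-adjacent by the second condition, which also forbids cycles and prevents any $d$-subset from lying in three facets, while consecutive $(d+1)$-sets share a unique ridge. The volume argument reproduced above then shows that the number of distinct $d$-subsets occurring among the $F_i$ equals $d\ell+d+1$, so reaching $\ell=(1-o(1))\tfrac1d\binom nd$ forces the walk to be almost perfectly efficient: essentially every $d$-subset of $[n]$ must occur in some facet, and at each step one must be able to slide a fresh ridge into a previously unused region of $\binom{[n]}{d}$. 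It is convenient to encode the walk by its ridge sequence $R_i:=F_i\cap F_{i+1}$, which is a walk in $J(n,d)$ with $F_i=R_{i-1}\cup R_i$; the two conditions then translate into: the $R_i$ are distinct, the $d-1$ ``private'' $d$-subsets together with the new ridge that each new facet introduces are all previously unused, and a bounded number of ``no immediate backtrack / no short collision'' conditions rule out the induced condition failing at small distance along the path.

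To build such a walk I would run a semi-random (Rödl nibble type) process that grows the path in $T=T(n)$ rounds. One maintains the set $\mathcal A\subseteq\binom{[n]}d$ of $d$-subsets not yet used as a ridge or private face, together with pseudorandomness invariants asserting that for most $(d-1)$- and $d$-subsets $S$ the number of admissible ways to continue a facet containing $S$ using members of $\mathcal A$ is close to the value predicted by $|\mathcal A|$ being spread out. In each round one appends a batch of new facets, choosing the successive swaps at random subject to all the constraints above; Azuma--Hoeffding and Freedman type concentration then show that with high probability the batch behaves as expected, $|\mathcal A|$ shrinks by the predicted factor, and the invariants can be re-established for the next round. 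One runs the process until $|\mathcal A|$ has been pushed down to about $(\log n)^{-\epsilon}\binom nd$, at which point the accumulated length is at least $\bigl(\tfrac1{d\cdot d!}-(\log n)^{-\epsilon}\bigr)n^d$; finally one attaches a terminal segment of length $O(dn)$ routing the path through any of the $n$ vertices not yet appearing, which is absorbed into the error term and produces a pure $d$-dimensional complex on all of $[n]$.

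The heart of the argument, and the main obstacle, is driving the semi-random process quantitatively all the way down to a leftover of size $(\log n)^{-\epsilon}\binom nd$. The availability and codegree quantities one must track are only of order $\mathrm{poly}(d)$ times the typical contribution, so as the density $|\mathcal A|/\binom nd$ decays the errors in the pseudorandomness invariants compound from round to round; it is the analysis of this accumulation—essentially, how small the density can be made before control is lost—that forces the hypothesis $\epsilon<1/d^2$. A second, softer difficulty is that the condition $|F_i\cap F_j|\le d-1$ is non-local whereas each round of the nibble makes only local random choices; one handles this by showing that every $d$-subset is ``dangerous'' for only $o(n)$ earlier facets (those meeting it in $d-1$ vertices), so that forbidding the continuations they would rule out removes only a lower-order fraction of the admissible moves and does not disturb the concentration estimates. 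Since the two defining conditions are maintained as invariants throughout, once the process terminates there is nothing further to verify about the dual graph beyond bookkeeping the length.
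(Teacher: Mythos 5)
Your proposal shares the paper's broad strategy: grow the path by a sequential random process, track availability counts, and use martingale concentration to show the process survives until the available pool is forced down to a $(\log n)^{-\epsilon}$ fraction of $\binom{n}{d}$. The paper's formulation—map the straight corridor $SC_d(N)$ into $K_n^d$ one vertex at a time, never reusing a $(d-1)$-face—is essentially your ridge-sequence encoding, and the target length is obtained by the same volume count. However, the two places where the real work happens are either underspecified or missing.

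First, the quantities you say you will track are the wrong shape. When the process appends a vertex $v$ to the current terminal ridge $R$, the constraint is that no $(d-2)$-face $\tau \subset R$ may have $\tau \cup \{v\}$ already used; so availability is indexed by $(d-2)$-subcomplexes $A$, and to compute the drift of these counts over a short stretch of the process you must also track availability relative to the larger $(d-2)$-subcomplexes (up to $2d$ vertices and $d^2$ faces in the paper) that arise as intermediate positions. Your ``number of admissible continuations of a facet containing $S$'' for $S$ a $(d-1)$- or $d$-subset does not capture these. Also, a ``for most $S$'' invariant is insufficient: the process has no freedom to route around a bad terminal ridge, so you need control of $Y_A$ for \emph{all} relevant small $A$ simultaneously. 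Second, and more fundamentally, a naive one-step martingale for $Y_A$ fails because $Y_A$ is local—if the terminal ridge is far from $A$, the one-step conditional expected change is exactly zero while the trajectory prescribes a nonzero drift, so the super/submartingale condition cannot hold step by step. A nibble with multiplicatively-shrinking $|\mathcal A|$ doesn't resolve this, since a round must then span $\Theta(n^d)$ steps, and re-establishing the invariants after such a round is precisely the problem one started with. The paper's key device here is to split the step indices into $3d+1$ arithmetic progressions and build a separate martingale from the increments of each $W_{A,j}$ along its own progression; $3d+1$ intermediate steps are enough for the process to ``forget'' its position, restoring a usable expected-drift estimate. Your outline has no analogue of this mechanism, and without one the concentration argument does not close.
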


We also consider the maximum possible diameter of a $d$-dimensional pseudomanifold on $n$ vertices. Recall that a  $d$-complex $X$ is said to be a pseudomanifold provided that each $(d - 1)$-face of $X$ is contained in exactly two $d$-faces. We let $H_{pm}(n, d)$ denote the maximum diameter of a $d$-dimensional pseudomanifold on $n$ vertices. The best-known bounds until now appear in \cite{CriadoNewman}, and the question of estimating $H_{pm}(n, d)$ had previously been considered in \cite{CriadoSantos}.
\begin{theorem}\cite[Theorem 1.3]{CriadoNewman}
For $d \geq 2$ we have
\[\frac{(1 - o_n(1))n^{d}}{4e(d + 1)!(d + 1)^4} \leq H_{pm}(n, d) \leq \frac{6n^d}{(d + 2)!}.\]
\end{theorem}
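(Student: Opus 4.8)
\medskip\noindent\textbf{Proof proposal.} The statement has two halves. The upper bound $H_{pm}(n,d)\le \tfrac{6n^d}{(d+2)!}$ is an elementary counting argument, so I would dispatch it first; the lower bound needs a randomized construction, sketched afterward.

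For the upper bound, I would run the volume argument from the introduction but extract more from the pseudomanifold hypothesis. Let $X$ be a $d$-pseudomanifold on $n$ vertices and let $\sigma_0,\sigma_1,\dots,\sigma_\ell$ be a geodesic in $\mathcal{G}(X)$ realizing $\ell=\diam(X)$. Being a geodesic, $\sigma_i$ and $\sigma_j$ share a $(d-1)$-face only when $|i-j|\le 1$, so reading off $\sigma_0,\dots,\sigma_\ell$ exposes exactly $(d+1)+d\ell$ distinct $(d-1)$-faces: exactly $\ell$ of these are ``internal'' (the faces $\sigma_i\cap\sigma_{i+1}$), and the other $(d+1)+(d-1)\ell$ lie in a single path simplex, hence --- by the pseudomanifold condition --- in a unique $d$-face off the path. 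The key observation is that one off-path $d$-face $\tau$ can contain at most three of these exposed facets: if $\tau$ shares facets with $\sigma_i$ and $\sigma_j$ then $\sigma_i\sim\tau\sim\sigma_j$ is a walk of length $2$, so the geodesic property forces $|i-j|\le 2$, while $\tau$ shares at most one facet with each $\sigma_i$ (two would give $\tau\subseteq\sigma_i$). Hence $\mathcal{G}(X)$ has at least $(\ell+1)+\tfrac13\big((d+1)+(d-1)\ell\big)=\tfrac{d+2}{3}\ell+O(d)$ vertices, i.e.\ $f_d(X)\ge \tfrac{d+2}{3}\ell$. Double counting incidences between $d$-faces and their facets gives $(d+1)f_d(X)=2f_{d-1}(X)\le 2\binom{n}{d}$, and combining the two inequalities yields $\ell\le \tfrac{6}{(d+1)(d+2)}\binom{n}{d}\le\tfrac{6n^d}{(d+2)!}$.

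For the lower bound the plan is to build $X$ probabilistically so that its dual graph contains an induced (hence geodesic) path of length $\ell=\Omega\big(n^d/((d+1)!(d+1)^4)\big)$. The engine producing the $n^d$ scaling is the one behind every known $\Omega(n^d)$ diameter bound: grow a path of $d$-simplices $\sigma_0,\sigma_1,\dots$ with consecutive simplices sharing a facet, and \emph{reuse} old vertices aggressively --- each $\sigma_i$ uses $d+1$ vertices out of $[n]$, so with care the path can run for a number of steps comparable to $\binom{n}{d}$. Two things must be maintained throughout: (i) no shortcuts, i.e.\ the dual-graph distance between $\sigma_i$ and $\sigma_j$ stays equal to $|i-j|$; and (ii) the pseudomanifold condition --- every $(d-1)$-face ends up in exactly two $d$-faces. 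Randomizing the reused vertex handles (i): at each step only $O(\mathrm{poly}(d))$ of the already-placed simplices are ``dangerous'' for a given exposed facet, so a random choice dodges them, and a union bound over the at most $\binom{n}{d}$ candidate shortcut pairs keeps the path geodesic with positive probability.

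Condition (ii) is the real obstacle. The clean topological move --- build the path inside a triangulated $d$-ball and take the double across its boundary, which is automatically a closed pseudomanifold --- does not work directly: the second copy, together with the ``prism layer'' over the boundary sphere, manufactures shortcuts that collapse the diameter, and in any case a path-like $d$-ball is essentially as hard to construct as a path-like $d$-sphere (delete one facet of a sphere to pass between them). So instead I would interleave the completion with the growth of the path: whenever extending the path exposes new $(d-1)$-faces, cap each of them immediately with an auxiliary $d$-simplex built from a small reserved pool of vertices, choosing the new apex to avoid the $O(\mathrm{poly}(d))$ nearby simplices that could create a shortcut, and organizing these caps into $O(d)$ successive layers so that the process closes up with every $(d-1)$-face in exactly two $d$-faces. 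The cost of the reserved pool and the $\sim d$ capping layers is precisely the extra $\mathrm{poly}(d)$ in the denominator --- the $(d+1)^4$ --- separating this lower bound from the $\tfrac{6n^d}{(d+2)!}$ ceiling, and closing that gap is what the rest of the paper is about. The bulk of the work in this half is the simultaneous bookkeeping: verifying that the capping never spoils the geodesic property, that the layers terminate, and that the vertex budget still affords $\ell=\Omega\big(n^d/((d+1)!(d+1)^4)\big)$ steps --- a first- or second-moment argument (or a local-lemma style argument) run over the construction.
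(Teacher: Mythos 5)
This theorem is a quoted result of Criado and Newman; the paper you were given does not reprove it, so there is no ``paper's proof'' to compare against line by line. I will therefore assess your proposal on its own merits and against the approaches that are known to work.

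\medskip\noindent\textbf{Upper bound.} Your argument is correct and recovers exactly $6n^d/(d+2)!$. The key steps all hold: on a diametral geodesic $\sigma_0,\dots,\sigma_\ell$, the geodesic property forces $\sigma_i$, $\sigma_j$ to share a facet only when $|i-j|\le 1$; the $(d+1)+(d-1)\ell$ non-internal facets each lie, by the pseudomanifold condition, in a unique off-path $d$-face; an off-path $\tau$ can be adjacent to at most three consecutive path simplices (since $\sigma_i\sim\tau\sim\sigma_j$ forces $|i-j|\le 2$) and to each at most once (two shared facets would force $\tau=\sigma_i$). Hence $f_d(X)\ge \frac{d+2}{3}\ell$, and combining with $(d+1)f_d=2f_{d-1}\le 2\binom{n}{d}$ gives the bound. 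This is in spirit the degree-counting argument the paper attributes to \cite{CriadoNewman} (the dual graph of a $d$-pseudomanifold is $(d+1)$-regular, and your facet-counting is precisely the geodesic version of that regularity). The present paper goes further: it proves the dual graph is $(d+1)$-\emph{connected} (Theorem~\ref{pmConnectivity}), which upgrades the constant from $6/(d+2)$ to $2/(d+1)$, i.e.\ to $2n^d/((d+1)(d+1)!)$. Your route cannot reach that constant without the connectivity input, but it cleanly proves the weaker bound quoted here.

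\medskip\noindent\textbf{Lower bound.} Here there is a genuine gap. The interleaved ``capping'' scheme --- grow a path of $d$-simplices and, whenever a new boundary $(d-1)$-face appears, immediately attach a fresh $d$-simplex over it --- is not shown to terminate, and it is not clear that it can. Each cap $\tau$ you attach resolves one exposed facet but creates $d$ new ones; the claim that this ``closes up in $O(d)$ layers'' is asserted, not argued, and the most naive reading (keep attaching fresh simplices) never closes up at all. You also dismiss the sphere route on the grounds that ``doubling a ball'' introduces shortcuts, but that is not the construction used in practice. Criado and Newman take the \emph{specific} triangulated $d$-sphere $\partial SC_{d+1}(N)$ --- the boundary of a $(d+1)$-dimensional straight corridor --- which is automatically a closed $d$-pseudomanifold, is known (\cite[Lemma~3.2]{CriadoNewman}, quoted in the present paper as Lemma~\ref{boundarydiameter}) to have diameter at least $\frac{d}{d+1}N-d-1$, and is then mapped into $K_n^d$ by a random simplicial map that is injective on $(d-1)$-faces. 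That single move replaces your whole capping-and-bookkeeping scheme: the pseudomanifold property is built into the source complex and only has to be preserved under the map, and the diameter bound for the source is an elementary fact. The randomized part then reduces to exactly the kind of ``avoid repeating a $(d-1)$-face'' analysis used for $H_s$, via the Lov\'asz Local Lemma in \cite{CriadoNewman} and via the differential-equations method in the present paper. Without some concrete mechanism for terminating the capping (or a pivot to the $\partial SC_{d+1}(N)$ construction), your lower-bound half does not constitute a proof.
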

\noindent
Here we make an improvement in both the upper bound and the lower bound to the following:
\begin{theorem} \label{pmdiametertheorem}
For $d \geq 2$, $\epsilon < 2/(d^3 + d^2 - 2)$ and $n$ large enough 
\[\left( \frac{2}{(d + 1)(d + 1)!} - (\log n)^{-\epsilon} \right)n^d \leq H_{pm}(n, d) \leq \frac{2n^d}{(d + 1)(d + 1)!}.\]
\end{theorem}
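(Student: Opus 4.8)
The two inequalities are proved separately: the upper bound by a short counting argument, the lower bound by an explicit construction.

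\smallskip
\noindent\emph{Upper bound.} Double counting incidences between $d$-faces and their $(d-1)$-dimensional facets in a $d$-pseudomanifold $X$, and using that each $(d-1)$-face lies in exactly two $d$-faces, gives $(d+1)f_d(X) = 2f_{d-1}(X)$, hence $f_d(X) = \tfrac{2}{d+1}f_{d-1}(X) \le \tfrac{2}{d+1}\binom{n}{d}$, and also that $\mathcal{G}(X)$ is $(d+1)$-regular. This by itself yields only $\diam(X) \le f_d(X)-1 < \tfrac{2n^d}{(d+1)!}$, which is too weak by a factor of $d+1$, and the analogous bound fails for arbitrary $(d+1)$-regular graphs, so the pseudomanifold structure has to be used. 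The plan is to invoke that the dual graph of a strongly connected $d$-pseudomanifold is $(d+1)$-connected (classical for normal pseudomanifolds; the general case ought to be routine, since a pinch cannot disconnect a strongly connected complex). Choosing $d$-faces $\sigma,\sigma'$ realizing $\ell := \diam(X)$ and applying Menger's theorem yields $d+1$ internally disjoint $\sigma$--$\sigma'$ paths, each of length at least $\ell$, hence at least $2+(d+1)(\ell-1) = (d+1)\ell - (d-1)$ distinct $d$-faces. Combining with $f_d(X) \le \tfrac{2}{d+1}\binom{n}{d}$ and $\binom{n}{d} < n^d/d!$ (with room to spare once $n$ is large) gives $\ell \le \tfrac{2n^d}{(d+1)(d+1)!}$.

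\smallskip
\noindent\emph{Lower bound.} The upper bound together with the identity $f_d = \tfrac{2}{d+1}f_{d-1}$ forces any near-optimal pseudomanifold to use almost all $\binom{n}{d}$ of the $(d-1)$-faces and to have a dual graph with roughly $d+1$ times as many $d$-faces as its diameter --- morally a ``$(d+1)$-strand braided path''. I would aim to produce such a complex by adapting the construction behind Theorem~\ref{maintheorem}: the $d$-faces should form a long chain as there, but arranged so that the $(d-1)$-faces that would otherwise lie in a single $d$-face can instead be completed in groups so as to make the complex a pseudomanifold. Two issues must be handled. First, a single $(d-1)$-simplex cannot bound a $d$-pseudomanifold-with-boundary --- already for $d=2$ there is a parity obstruction at its vertices --- so these ``boundary'' facets must be completed in groups, each group bounding a small $d$-ball patch. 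Second, such a patch must not create shortcuts between far-apart faces of the chain, which would collapse the diameter; attaching each patch only along facets incident to a bounded stretch of the chain makes it essentially pendant in the dual graph and preserves the diameter up to lower-order terms. The overhead of these patches --- the extra $(d-1)$-faces and vertices they consume --- is what accounts for the gap between the constant $\tfrac{1}{d\cdot d!}$ of Theorem~\ref{maintheorem} and $\tfrac{2}{(d+1)(d+1)!}$, and it also degrades the admissible error exponent from $1/d^2$ to $2/(d^3+d^2-2)$.

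\smallskip
\noindent I expect the lower bound to be the main obstacle: one must simultaneously design (or re-run) the probabilistic construction so that its boundary facets are grouped into honest $d$-ball patches, verify that filling them in introduces no diameter-shortening shortcuts, and carry out the bookkeeping on vertices and $(d-1)$-faces precisely enough that the resulting diameter is still $\bigl(\tfrac{2}{(d+1)(d+1)!} - (\log n)^{-\epsilon}\bigr)n^d$ for all $\epsilon < 2/(d^3+d^2-2)$. The upper bound, by comparison, is essentially immediate once the $(d+1)$-connectivity of dual graphs of pseudomanifolds is available.
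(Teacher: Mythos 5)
Your upper bound uses exactly the same ingredients as the paper: the identity $(d+1)f_d = 2f_{d-1}$, the $(d+1)$-connectivity of the dual graph, and a BFS/Menger layer count (the Caccetta--Smyth bound). But the crucial step --- that the dual graph of a strongly connected $d$-pseudomanifold is $(d+1)$-connected --- is not ``routine,'' and the paper in fact could not find it in the literature. The only nearby result (Barnette) is about the $1$-skeleton, not the dual graph, and the informal justification you offer (``a pinch cannot disconnect a strongly connected complex'') does not explain why deleting $d$ \emph{facets} cannot disconnect the dual graph. The paper's proof of this (Theorem~\ref{pmConnectivity}) requires a genuine homological lemma (Lemma~\ref{pmConnectivityKeyLemma}): any complex of dimension at most $d$ with at most $d$ facets has vanishing $\tilde H_{d-1}$; given that, a minimal disconnecting set $A$ would have to support the $(d-1)$-cycle $\partial_d(C)$ for a strongly connected component $C$, forcing a strictly smaller disconnecting set. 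Without this lemma your upper bound argument has a real gap, not a deferred detail.

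Your lower bound plan is genuinely different from the paper's, and as you acknowledge it is only a sketch; the paper's route avoids all of the difficulties you list. Rather than building a long path of $d$-faces and trying to ``close it up'' with $d$-ball patches, the paper directly maps $\partial SC_{d+1}(N)$ --- the boundary of the $(d+1)$-dimensional straight corridor, which is already a triangulated $d$-sphere, hence automatically a pseudomanifold --- into $K_n^d$ via the same vertex-by-vertex randomized process, with the same supermartingale/stopping-time machinery as Theorem~\ref{maintheorem}. The diameter of $\partial SC_{d+1}(N)$ is $\tfrac{d}{d+1}N - O(1)$ (Lemma~\ref{boundarydiameter}), and the maximum reachable $N$ comes out to $(\tfrac{1}{d!\binom{d+1}{2}} - (\log n)^{-\epsilon})n^d$, giving the stated constant. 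The extra factor in $p = 1 - d!\binom{d+1}{2}t$ (compared to $p = 1 - dd!t$) comes from the fact that each newly mapped vertex closes $\binom{d+1}{2}$ rather than $d$ new $(d-1)$-faces, and the weaker admissible error exponent $2/(d^3+d^2-2)$ comes out of the link-of-a-vertex bookkeeping in $\partial SC_{d+1}$, not from ``patch overhead'' as in your heuristic. So your intuition about where the constant and the exponent come from is not quite right, and the construction you propose would need to be replaced rather than refined: the paper never needs to argue that patches avoid shortcuts, because there are no patches.
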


The proof of the lower bound in Theorem~\ref{pmdiametertheorem} is  similar to the proof of the lower bound for $H_s(n, d)$ given in Theorem~\ref{maintheorem}, which we outline in the next section. The proof of Theorem~\ref{maintheorem} follows in Section~3. The proof of Theorem~\ref{pmdiametertheorem} is given in Sections~4~and~5. Some final remarks and comments regarding directions for further research are given in Section~6.

\section{A randomized complex construction process}
Our proof of Theorem~\ref{maintheorem} uses a probabilistic construction that we analyze using the differential equations method for establishing dynamic concentration. A probabilistic approach to the problem of finding a lower bound on $H_s(n, d)$ was previously applied in \cite{CriadoNewman}. But the process and associated dynamic concentration inequalities developed here are substantially different than the approach of \cite{CriadoNewman}, which used the Lov\'asz Local Lemma to show that with positive probability a simplicial complex chosen at random from a carefully selected distribution can be used to construct the desired complex of high diameter.  Here we introduce a simple randomized algorithm for the construction of a simplicial complex of high diameter. We then proceed to show that key statistics of this randomly constructed simplicial complex are tightly concentrated around their expected trajectories through many steps of the construction process. These dynamic concentration inequalities allow us to conclude that the randomized algorithm will succeed in producing the high diameter complex with high probability. For an introduction to the differential equations method for establishing dynamic concentration see \cite{Warnkesurvey} and \cite{Wormaldnotes}. 

Let $K^d_n$ denote the complete $d$-dimensional simplicial complex on $n$ vertices. Our goal is to find a long induced path in the dual graph of $K^d_n$. The dual graph of $K^d_n$ is commonly known as the Johnson graph $J(n, d + 1)$. The problem of computing $H_s(n, d)$ is the same as finding the length of the longest induced path in $J(n, d + 1)$.

For integers $ i < j$ we define $ [i,j] = \{i,i+1, \dots, j \}$.
Borrowing a definition of \cite{CriadoNewman}, we define the \emph{$d$-dimensional straight corridor on $N$ vertices}, denoted $SC_{d}(N)$,

to be the $d$-dimensional simplicial complex on $[N]$ whose facets are $[1, d  + 1], [2, d + 2], ..., [N - d, N]$, i.e. the facets are sequences of $d + 1$ consecutive numbers in $[N]$. We also will allow for the natural extension to $N = \infty$.

To find a long induced path in the dual graph of $K^d_n$ we want to find a large $N$ so that there is a simplicial map
$\phi: SC_d(N) \rightarrow K^d_n$
so that no two $(d - 1)$-faces in $SC_d(N)$ map to the same $(d -1)$-face of $K^d_n$. In doing so the dual graph of $SC_d(N)$ will match the dual graph of its image in $K^d_n$. The inductive structure of $SC_d(N)$ allows us to construct this map one vertex at a time. First we map the first $d + 1$ vertices of $SC_d(N)$ to any set of $d + 1$ vertices in $K^d_n$. Now we look at the $(d-1)$-face that is $\phi([2, d + 1])$ and set $ \phi(d+2)$ by picking a vertex uniformly at random from all vertices $v$ other than $\phi(1), \phi(2), \dots, \phi(d+1)$. More generally, after the images of the first $k$ vertices of the simplicial map are set, we consider the terminal $(d-1)$-face in the current image of the straight corridor, $\phi([k - (d - 1), k])$ in $K^d_n$, and we let $X_k$ denote the set of vertices $v \in K^d_n$ so that no $(d-1)$-faces of the $d$-simplex $\phi([k - (d - 1), k]) \cup \{v\}$, other than $\phi([k - (d - 1), k]),$ are in the image of $\phi$. We choose $v$ uniformly at random from 
$$ X_k \setminus \left\{ \phi(k -2d+1), \phi(k-2d+2), \dots, \phi(k) \right\}$$ 
and set $\phi(k + 1) = v$. Imposing the condition
that $ \phi(k+1)$ does not equal the image of any of the previous $2d$ vertices in the straight corridor simplifies the proof. Our task is to bound from below how long this process is likely to continue until we reach a place where $|X_k| \le 2d$. 

The intuition is that at each step $d$-many $(d - 1)$-faces are `deleted' in the sense that they cannot be traversed in future steps of the process, and over many steps these deleted faces should be randomly distributed across $ K_n^d$. So at step $i$, the $(d - 1)$-faces that haven't yet been deleted should resemble a random Linial--Meshulam $(d - 1)$-complex with $\binom{n}{d} - di$ many $(d - 1)$-faces. Recall that the Linial--Meshulam random $(d-1)$-complex is the probability space $\Delta_{d - 1}(n, p)$ sampled by starting with the complete $(d - 2)$-complex on $[n]$ and including each $(d-1)$-dimensional face independently with probability $p$. For our purposes the reader can largely view this as the same model as an Erd\H{o}s--R\'enyi $d$ uniform random hypergraph.

If it is indeed the case that the  remaining faces resemble a Linial--Meshulam random complex then $X_k$ would be distributed as a binomial random variable with $n - 2d$ trials and success probability $$\left(1 -  di \binom{n}{d}^{-1} \right)^d.$$ Under such an assumption, $X_k$ would be much larger than $2d$ with high probability all the way up to $i = (1 - o_n(1))\frac{n^d}{dd!}$ giving us our lower bound. Our proof establishes dynamic concentration inequalities that make this argument rigorous. The error bounds in the dynamic concentration argument determine the second-order term in Theorem~\ref{maintheorem}. 

\section{Proof of Theorem \ref{maintheorem}}
We consider the process described above of mapping $SC_d(N)$ into $K_n^d$ one vertex at a time. We begin by defining two random variables relative to this process. 
Let $ C_i$ be the collection of $ (d-1)$-faces in $ K_n^d$ that appear in the image of $ SC_d(d+i)$. This collection of $(d-1)$-faces is {\em closed} in the sense
that the process cannot choose a vertex that causes the image of the straight corridor to contain one of them a second time.
Let $A$ a $(d - 2)$-dimensional subcomplex of $K_n^d$. We let $ v_A$ denote the number of vertices in $A$, $|A|$ denote the number of $(k-2)$-faces in $A$, and  
$Y_A(i)$ be those vertices $v$ that for all $(d-2)$-faces $\sigma$ in $A$, $\sigma \cup \{v\} \not\in C_i$.  (We use the notation $ Y_A(i)$ for both the set defined here and its cardinality throughout this work. The meaning should always be clear in context.)

We begin the process simply choosing a $d$-face uniformly at random and setting $\phi(1)$, \dots , $\phi(d + 1)$ equal to the vertices of this $d$-face. We go from step $i$ to $i + 1$ (with step 1 being the selection of the starting $d$-face) by choosing the image of $d + i +1$. After step $i$ the number of ways to extend the path is $X_{d+i} = | Y_A(i) \setminus \{ \phi(i-d+1), \phi(i-d+2), \dots, \phi(i+d) \} | $ where $A$ the boundary of the $(d - 1)$-face $\phi([i+1, d+i])$. However in order to keep track of $Y_A$ when $A$ is the boundary of a $(d - 1)$-simplex it will be necessary to keep track of $Y_A$ for a few other choices of $(d-2)$-dimensional subcomplexes $A$. 

There is a significant difficulty in applying the differential equations method to establish dynamic concentration of $Y_A(i)$, where $A$ is the boundary
of a $(d-1)$-face. 
In a standard application of the method we introduce a martingale that balances the one-step expected change of a given variable, conditioned on the
history of the process, with the deterministic change in the variable's
trajectory equation over the corresponding interval of time. The variable $ Y_A$ is 'local' in the sense that it is based
at the fixed $ (d-2)$-complex $A$. This implies that if the currently last $ (d-1)$-face in the image of our straight corridor is far
from $A$ then the one-step conditional expected change in $ Y_A$ is zero, and this will spoil the martingale condition that we need.
We overcome this difficulty by adding a wrinkle to the method. We divide the process into a fixed number of subsequences and
introduce martingales that record the change in $Y_A$ relative to the change in the trajectory equation in each of these subsequences.
The number of steps in the underlying process spanned by a single step in one of the subsequences is enough for the location of the
last $(d-1)$-face in the image of the straight corridor to be almost uniformly random relative to the previous step observed by
the given subsequence.  In other words, after a small number of steps the process `forgets' the recent past and we can treat
the location of the terminal $(d-1)$-face as being nearly uniformly random. We note in passing that this technique can
be viewed as a variations on techniques developed in the study of random triangle removal \cite{BohmanFriezeLubetzky} and 
the triangle-free process \cite{BohmanKeevashr3t, FizGriffithsMorrisr3t}.  In those works 
one extends in space within the discrete structure itself to get to a suitably random position, and here we extend in time to
find a suitably random position.

For each $j \in \{0, ..., 3d\}$ and $A$ a $(d - 2)$-subcomplex of $K_n^d$, let $W_{A, j}(i)$ denote the \emph{total} number of vertices removed 
from $Y_A$ up to step $i$ in rounds congruent to $j$ mod $3d + 1$. Note that we have
\begin{equation}
\label{eq:adding}
Y_A(i) = n - v_A - \sum_{j=0}^{3d} W_{A,j}(i). 
\end{equation}
Following the intuition that $ C_i$ should resemble 
a Linial--Meshulam random $(d - 1)$-complex, we anticipate that we should have
\[W_{A, j}(i) \approx \frac{n}{3d+1} \left(1 - \left(1 - \frac{dd!i}{n^d}\right)^{|A|} \right).\]

The task at hand is to prove the following key lemma. 
Let $ {\mathcal A}$ be the collection of all $ (d-2)$-complexes $A$ such that $ v_A \le 2d$ and $ |A| \le  d^2$.
We establish a trajectory for $ W_{A, j}(i)$ by scaling time as
$t = \frac{i}{n^d} $
and defining
$p = 1 - dd!t$.
Note that our intuition is that $C_i$ is roughly the same as a binomial collection of $(d-1)$-faces where each face is chosen with probability $1-p$.
For a given error function $e = e(t)$ we define the stopping time $T= T_e$ to be the first step $i$ of the process for which there exists a fixed $(d-2)$-complex $A \in {\mathcal A}$ and
$j \in \{0, ..., 3d\}$ such that $ W_{A,j}$ does {\bf not} fall in the
interval
\[ I_{A}(t) := \left[  \frac{1}{3d+1} \left(n \left(1 - p^{|A|} \right) - \frac{n^{3/4} e(t)}{2} \right) ,   \frac{1}{3d+1} \left( n  \left(1 - p^{|A|} \right) + \frac{n^{3/4} e(t)}{2} \right) \right]. \]
Finally, we say that an event $E_n$ occurs \emph{with overwhelming probability} to mean that $\Pr(E_n) \geq 1 - n^{-\omega(1)}$. Note that if an event happens with overwhelming probability we also have that polynomially many identically distributed events simultaneously occur with overwhelming probability as well.  

\begin{lemma}\label{KeyDiffEqLemma}
Fix $d \geq 2$ and let $ 0 < \epsilon < 1/ d^2$. We have
\[ T_e \ge  \left(\frac{1}{dd!} - (\log n)^{-\epsilon}\right)n^d \]

with overwhelming probability, where $e(t) =  \exp \left\{  (10d+11) (1 - dd!t)^{-d^2} \right\}$.
\end{lemma}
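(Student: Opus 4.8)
The plan is to establish dynamic concentration of the family $\{W_{A,j}(i)\}$ via the differential equations method, running a single supermartingale/submartingale argument for each pair $(A,j)$ against the trajectory $w_{A,j}(t) = \tfrac{n}{3d+1}(1 - p^{|A|})$ with $p = 1 - dd!t$. Since the variables $W_{A,j}$ only change in rounds $i \equiv j \pmod{3d+1}$, I would first reindex time within each residue class: let $m$ count the rounds in class $j$ seen so far, so one step of this subsequence corresponds to $3d+1$ steps of the underlying process. The key structural point, already flagged in the text, is that over those $3d+1$ intermediate steps the terminal $(d-1)$-face of the image corridor moves far enough from $A$ that its position relative to $A$ is \emph{almost} uniformly random — more precisely, conditioned on the history up through the previous round in class $j$, the probability that a given vertex $v \in Y_A$ gets removed in round $i$ is, up to a $n^{-\Omega(1)}$ additive error, the ``mean-field'' value $\binom{n}{d}^{-1}\cdot(\text{number of }(d-2)\text{-faces of }A) \approx |A| d!\, n^{-d}$ times the conditional survival correction coming from the fact that $Y_A(i)$ has already shrunk. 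Differentiating $w_{A,j}$ gives $w_{A,j}'(t) = \tfrac{n}{3d+1}\cdot dd!\,|A|\, p^{|A|-1}$, and one checks that the one-step expected decrease of $W_{A,j}$ matches $w_{A,j}'(t)$ scaled by $dt = (3d+1)n^{-d}$ up to an error absorbed by the half-width $\tfrac12(3d+1)^{-1} n^{3/4} e(t)$.

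Concretely I would, for each $(A,j)$, define two auxiliary sequences
\[
  L_{A,j}^{\pm}(m) = \pm\bigl(W_{A,j}(i_m) - w_{A,j}(t_m)\bigr) - \frac{n^{3/4}e(t_m)}{2(3d+1)} + (\text{error-correction drift}),
\]
where $i_m$ is the $m$-th round in residue class $j$ and $t_m = i_m/n^d$, designed so that $L^{\pm}_{A,j}(m\wedge \tau)$ is a supermartingale for $\tau$ the first time any variable leaves its interval. The drift terms must be chosen so that (i) they dominate the one-step martingale discrepancy — here the exponential form $e(t) = \exp\{(10d+11)(1-dd!t)^{-d^2}\}$ is forced, since the relative survival probability $p^{|A|}$ decays polynomially in $p$ with exponent up to $d^2$, so its reciprocal-type error blows up like $p^{-d^2}$ and the Grönwall-type integration of the self-correcting error ODE $e' \gtrsim (\text{const})\, p^{-1} |A| e$ produces exactly a $\exp\{c\,p^{-d^2}\}$ factor — and (ii) the total accumulated drift stays $o(n^{3/4}e(t))$ so the interval is not exited by drift alone. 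The initial condition is clean: at $i=0$ all $W_{A,j}=0$ and $w_{A,j}(0)=0$, well inside $I_A(0)$. Bounded-differences (each $W_{A,j}$ changes by at most one per relevant round, and per round at most $d$ vertices are ever removed from a given $Y_A$, in fact from one fixed $A$ at most one round-event affects it) lets me apply the Azuma–Hoeffding / Freedman inequality to conclude each $L^{\pm}_{A,j}$ stays nonpositive with overwhelming probability; a union bound over the $n^{O(1)}$ many pairs $(A,j) \in \mathcal{A}\times\{0,\dots,3d\}$ (there are polynomially many since $v_A \le 2d$, $|A|\le d^2$) preserves overwhelming probability. This pins $W_{A,j}$ inside $I_A(t)$, i.e. $T_e \ge i$, for all $i$ up to the point where the argument breaks.

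The argument breaks precisely when $e(t)$ is no longer $o(n^{1/4})$ relative to the main term, i.e. when $n^{3/4}e(t) \gtrsim n$, equivalently $e(t) \gtrsim n^{1/4}$, equivalently $(10d+11)(1-dd!t)^{-d^2} \gtrsim \tfrac14 \log n$, i.e. $(1 - dd!t)^{-d^2} \gtrsim \tfrac{\log n}{4(10d+11)}$, i.e. $p = 1 - dd!t \gtrsim (\log n)^{-1/d^2}$. Writing $1 - dd!t = (\log n)^{-\epsilon}$ with $\epsilon < 1/d^2$ keeps $e(t) = \exp\{(10d+11)(\log n)^{\epsilon d^2}\}$ genuinely subpolynomial (since $\epsilon d^2 < 1$), so the error term $n^{3/4}e(t) = o(n)$ and the concentration interval $I_A(t)$ is still meaningful — it has nonzero width and is contained in $[0, n]$. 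Translating back: at $t = \tfrac{1}{dd!}(1 - (\log n)^{-\epsilon})$, i.e. $i = (\tfrac{1}{dd!} - \tfrac{1}{dd!}(\log n)^{-\epsilon})n^d$, the process has not yet stopped, which after adjusting the constant in front of $(\log n)^{-\epsilon}$ (absorbing $\tfrac{1}{dd!}$ into the $o$-notation, or just noting $\tfrac{1}{dd!}(\log n)^{-\epsilon} \le (\log n)^{-\epsilon}$) gives $T_e \ge (\tfrac{1}{dd!} - (\log n)^{-\epsilon})n^d$ as claimed.

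I expect the main obstacle to be step (i) above: rigorously controlling the one-step conditional expectation $\E[W_{A,j}(i+1) - W_{A,j}(i) \mid \mathcal{F}_i]$ and showing it agrees with $w_{A,j}'(t)\,dt$ up to an error that the drift can absorb. This requires quantifying the ``forgetting'' — showing that after $3d+1$ steps the terminal face's position, relative to the fixed complex $A$, is uniform up to a polynomially small error — which is the genuinely new ingredient relative to a textbook DEM application, and which couples back to the concentration of the \emph{other} variables $Y_{A'}$ for auxiliary complexes $A'$ that govern the support $X_k$ from which the next vertex is drawn. Making this mutual dependence non-circular (all variables stay on trajectory $\Rightarrow$ the forgetting estimate holds $\Rightarrow$ each variable's drift is controlled $\Rightarrow$ all variables stay on trajectory) is the delicate part, handled by the standard device of defining the stopping time $T_e$ to fire the instant \emph{any} variable misbehaves and doing all estimates on the event $\{i < T_e\}$.
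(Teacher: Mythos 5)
Your high-level architecture matches the paper's: a supermartingale $Z_{A,j}$ for each pair $(A,j)$ tracking $W_{A,j}$ in the residue class $j \pmod{3d+1}$, a stopping time $T_e$ that freezes the process the instant any variable leaves its window (making the mutual dependence non-circular), an Azuma/Freedman tail bound giving the per-pair failure probability, a union bound over the polynomially many $(A,j)$, and the observation that the argument runs until $n^{3/4}e(t)$ catches up with $np^{|A|}$. The ``forgetting'' over a $3d+1$-step block is also exactly the paper's device.

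However, you have explicitly flagged and then not filled the step that carries essentially all the technical weight: the one-step expectation estimate, which the paper isolates as Lemma~\ref{supermartigaleCondition}. This is not a routine ``uniform up to $n^{-\Omega(1)}$'' claim. The paper proves it by counting, for a fixed $(d-1)$-face $\sigma$ not yet in $C_i$, the number of choice-sequences $\psi(1),\dots,\psi(3d+1)$ that land $\sigma$ in $C_{i+3d+1}\setminus C_{i+3d}$: one fixes the assignment of $\sigma$ into $\psi(2d+1),\dots,\psi(3d+1)$ (giving the factor $dd!$), and then for each $x=2d,2d-1,\dots,1$ one must identify precisely which $(d-2)$-faces of $\lk(x)$ within $SC_d(\infty)$ have already-mapped vertices — for $x\ge 3$ the answer is a $(d-1)$-simplex boundary with $|A|=d+1$, but $x=2$ gives a complex with $2d-1$ facets and $x=1$ gives the full codimension-2 skeleton of $SC_{d-1}(2d)$ with $d^2$ facets. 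This case analysis is exactly why the class $\mathcal{A}$ must allow $|A|$ up to $d^2$ and $v_A$ up to $2d$, and the resulting ratio of products $\prod(np^{|A_x|}\pm n^\alpha e)$ over $(np^d\pm n^\alpha e)^{3d+1}$ is where the $p^{-d^2}$ error actually comes from. Relatedly, your heuristic ODE $e' \gtrsim p^{-1}|A|\,e$ does not produce $\exp\{c\,p^{-d^2}\}$ — integrating $p^{-1}$ in $t$ gives only a power of $p^{-1}$; the error term that actually forces the double-exponential form is of size $\Theta(e(t)/p^{d^2})$ in the supermartingale drift, giving the ODE $e' \gtrsim \text{const}\cdot p^{-d^2}e$, and the paper's $e(t)=\exp\{(10d+11)p^{-d^2}\}$ oversolves it. So while your route is the paper's route, the proposal as written leaves the central combinatorial estimate as an acknowledged gap and mislocates the source of the $p^{-d^2}$.
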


We now note that Theorem \ref{maintheorem} follows directly from Lemma \ref{KeyDiffEqLemma}.
\begin{proof}[Proof of Theorem \ref{maintheorem}]
Take $\epsilon < 1/ d^2 $. Consider the path-mapping process from $SC_d(\infty)$ into $K^d_n$. 
Note that the process continues so long as $ X_{d+i} = Y_A(i) > 2d$, where $A$ is the boundary
of the final $ (d-1)$-face in the image of the straight corridor. It follows from (\ref{eq:adding}) that if $i< T_e$
and $ n p^{d} > n^{3/4} e(t) $ then $ Y_A(i) >2d$ and the process does not terminate at step $i$.  
At step
\[ i = i_{\rm end} := \left(\frac{1}{dd!} - (\log n)^{-\epsilon}\right)n^d, \] 
we have
\begin{gather*}
t =  \frac{1}{dd!} - (\log n)^{-\epsilon},  \ \ \ \ p = dd! (\log n)^{-\epsilon}, \ \ \ \  np^d = \Omega \left(  \frac{n}{ \log n} \right), \ \ \text { and } \\ 
e(t) = \exp \left\{ O \left( (\log n)^{ \epsilon d^2} \right)  \right\} =  n^{o(1)}. 
\end{gather*}
Thus, we have  $ n^{3/4} e(t) < np^d $ at $ i_{\rm end}$, and it follows that the event
that the process terminates before step $ i_{\rm end} $ is contained in the event that $ T_e < i_{\rm end}$. So Lemma~\ref{KeyDiffEqLemma} implies
that our process produces the desired simplicial complex with overwhelming probability.

\end{proof}

\subsection{Proof of Lemma~\ref{KeyDiffEqLemma}}

Recall that $ {\mathcal A}$ is the collection of all $ (d-2)$-complexes $A$ such that $ v_A \le 2d$ and $ |A| \le d^2$.
For the proof of Theorem~\ref{maintheorem} from Lemma~\ref{KeyDiffEqLemma} we were only interested in $Y_A$ for $A$ the boundary of a $(d - 1)$-simplex, and moreover we only required a lower bound on $Y_A$. That lower bound in turn depended on the upper bound on $W_{A, j}$ for the particular case that $A$ is a $(d - 1)$-simplex boundary. However through out the path mapping process the changes to $W_{A,j}$ for a subcomplex $A$ depends in a complicated way on $ Y_{A'}$
for various choices of $ A'$.  The collection ${\mathcal A}$ is chosen so that bounds on $Y_A$ for all $A \in {\mathcal A}$ are sufficient to establish concentration of $ W_{A,j}$ for all $A \in {\mathcal A}$.

We begin by noting the following, which follows from (\ref{eq:adding}),
\begin{equation}
    \label{eq:stopping}
i < T_e \ \ \Rightarrow \ \  Y_A(i) = n p^{|A|} \pm \frac{ n^{3/4} e(t)}{2} \pm O(1) \ \     
\text{ for all } A \in {\mathcal A} .
\end{equation}
For each $ A \in {\mathcal A}$ and each $ j \in \{0, \dots, 3d\}$ and each side of the
interval (upper and lower) we bound the probability that there is
a step $i < i_{\rm end}$ such that $ T_e = i$ because the variable $ W_{A,j}$ leaves the interval $ I_{A}(t)$
at step $i$ on the specified side of the interval. We show that each of these events is overwhelmingly 
unlikely.  As the number of choices of
$A$,$j$ and the side is polynomial in $n$, Lemma~\ref{KeyDiffEqLemma} follows from an application of the union bound.

Fix  $ A \in {\mathcal A}$ and $ j \in \{0, \dots, 3d\}$ and consider the desired upper bound on
$W_{A,j}$. We do not include the argument for the lower bound on $ W_{A,j}$ as it is essentially identical. However, in order that we have instances of the computations for both sides of the target interval, in the proof for pseudomanifolds in Section~4 below we carefully check the lower 
bound calculations while indicating that the upper bound is similar. Consider the sequence of 
random variables

\begin{equation}
\label{eq:mart}
Z_{A, j}(\ell) := W_{A, j}((3d + 1)\ell + j) - \frac{n}{3d + 1}\left( 1 - p^{|A|} \right) - \frac{n^{3/4} e(t)}{2(3d + 1)} 
\end{equation}
We want to show that $ Z_{A,j}(\ell) < 0$ for all $ \ell \le i_{\rm end}/(3d+1)$ with overwhelming probability.  
Note that $ Z_{A,j}(0) = - \Omega( n^{3/4} )$. Thus, in order for this variable to violate the desired condition
it needs to achieve a substantial positive change over the course of the process.  We show that this is overwhelmingly
unlikely by showing that sequence of $Z_{A, j} (\ell) $ is a supermartingale and 
applying a concentration lemma for supermartingales from \cite{BohmanTriangleFree}. 

In order to take full advantage of (\ref{eq:stopping}) we stop the
process of time $T_e$. Formally speaking, this means that we set $Z_{A,j}(\ell) $ equal to the expression in (\ref{eq:mart}) if
$ j + (3d+1)\ell \le T_e$ and we set $ Z_{A,j}(\ell)$ equal to $ Z_{A,j}( \ell-1)$ if $j + (3d+1)\ell > T_e$. There are two advantages
to stopping the process in this way.  First, we may assume that (\ref{eq:stopping}) holds when we are proving that the sequence is a martingale. 
The second advantage is if there is a value of $\ell $ such that $j + \ell(3d+1) \le i_{\rm end}$ and $ Z_{A,j}(\ell) \ge 0$
then we will hit the stopping time at that point, the whole process will `stop', and we will have 
$ Z_{A,j}( \lfloor i_{\rm end} /(3d+1) \rfloor) \ge 0$. It follows that the event that $ T_e < i_{\rm end}$ due to some random variable $ W_{A,j}$ exiting its target interval because it is too large is contained in the event
\[ \bigvee_{A\in {\mathcal A},j} \left\{ Z_{A,j}( \lfloor i_{\rm end} /(3d+1) \rfloor) \ge 0 \right\}.   \]
So, it suffices to show that event $ Z_{A,j}( \lfloor i_{\rm end} /(3d+1) \rfloor) \ge 0 $ is overwhelmingly unlikely
for all $A \in {\mathcal A}$ and $ j \in \{0,1, \dots , 3d\}$.

The first step in the proof is to establish that these sequences of random variables are indeed supermartingales.
\begin{lemma}\label{supermartigaleCondition}
Let $ (\mathcal{F}_i)_{i \ge 0}$ be the filtration defined by process, $ A \in {\mathcal A}$ and $ j \in \{ 0, \dots, 3d\}$. 
%
If $ (\ell+1)(3d+1) + j \le i_{\rm end}$ then we have
\[\E \left[Z_{A, j}(\ell + 1) - Z_{A, j}(\ell) \mid \mathcal{F}_{j + \ell(3d+1)} \right] \leq 0.\]
\end{lemma}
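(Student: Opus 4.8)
The plan is to compute the one-step conditional expected change $\E[Z_{A,j}(\ell+1) - Z_{A,j}(\ell) \mid \mathcal F_{j+\ell(3d+1)}]$ and show that the deterministic decrement built into the definition of $Z_{A,j}$ — the drop in $\frac{n}{3d+1}(1-p^{|A|})$ plus the drop in $\frac{n^{3/4}e(t)}{2(3d+1)}$ over the interval of $3d+1$ underlying steps — dominates the expected increase in $W_{A,j}$ over that same interval. Since $Z_{A,j}$ is frozen once $T_e$ is passed, I may assume throughout that every underlying step $i$ in the block $(j+\ell(3d+1), j+(\ell+1)(3d+1)]$ satisfies $i < T_e$, so that (\ref{eq:stopping}) holds and $Y_{A'}(i) = np^{|A'|} \pm \tfrac12 n^{3/4}e(t) \pm O(1)$ for every $A' \in {\mathcal A}$.

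The core computation is the expected number of vertices deleted from $Y_A$ during one underlying step $i$, conditioned on the history. A vertex $v$ is removed from $Y_A$ at step $i$ precisely when the newly closed $(d-1)$-face (equivalently, the $d$ newly closed $(d-1)$-faces $\phi([i-d+1,i])\cup\{\phi(i+1)\}$ minus the old terminal face — there are $d$ of them) together with $v$ completes a $(d-2)$-face of $A$; more precisely, $v$ leaves $Y_A$ if one of the $d$ new closed $(d-1)$-faces equals $\sigma\cup\{v\}$ for some $(d-2)$-face $\sigma$ of $A$. This requires the terminal $(d-1)$-face $\phi([i-d+1,i])$ to already share $d-1$ of its vertices with a $(d-2)$-face of $A$, i.e. the process must currently sit in one of finitely many "positions" relative to $A$. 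Here is where the subsequence trick is used: a single step of the subsequence indexed by $j$ spans $3d+1$ underlying steps, which is more than enough for the terminal face to have "forgotten" its location at the start of the block, so that summed over the block the probability of being in each relevant position relative to $A$ is essentially the uniform one. Conditioned on being in such a position and on the relevant $Y_{A'}$ values, the vertex chosen is nearly uniform on a set of size $Y_{A'}(i) - O(d) = np^{|A'|} \pm \tfrac12 n^{3/4}e(t) \pm O(1)$, and one counts how many $v$ this removes from $Y_A$. Summing over the $3d+1$ underlying steps and matching against $\frac{n}{3d+1}\big(p^{|A|}(t_\ell) - p^{|A|}(t_{\ell+1})\big)$ — whose derivative in $t$ is exactly $\frac{d\,d!\,|A|}{3d+1}p^{|A|-1}$, consistent with the Linial–Meshulam heuristic in the paper — shows the main terms cancel, leaving an error of order $n^{3/4}e(t)\cdot(\text{something})/n^{d}$ per underlying step.

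The remaining point is to check that this leftover error is absorbed by the decrement of the $e(t)$-term. Over the block, $\frac{n^{3/4}}{2(3d+1)}\big(e(t_\ell) - e(t_{\ell+1})\big) \approx \frac{n^{3/4}}{2(3d+1)}\cdot \frac{e'(t)}{n^d}$, and since $e(t) = \exp\{(10d+11)(1-dd!t)^{-d^2}\}$ we have $e'(t) = (10d+11)\,d^2\,dd!\,(1-dd!t)^{-d^2-1}e(t)$, which is a large multiple of $e(t)$ times a negative power of $p$. I would choose the constant $10d+11$ precisely so that this dominates the accumulated error terms from the Taylor expansions of $(1-di/n^d)^{|A|}$-type expressions, from the $O(1)$ and $n^{3/4}e(t)$ slack in $Y_{A'}$, and from the non-uniformity of the terminal face's position after only $3d+1$ steps. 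The main obstacle — and the step deserving the most care — is making the "forgetting" quantitative: bounding how far the distribution of the terminal $(d-1)$-face's position relative to $A$ after $3d+1$ steps is from uniform, and showing this discrepancy contributes only to the error term and not to the main term. Everything else is Taylor expansion and bookkeeping, which I would organize by first writing $\E[W_{A,j}(i+1)-W_{A,j}(i)\mid\mathcal F_i]$ exactly, then expanding each factor to first order in $1/n$ and collecting.
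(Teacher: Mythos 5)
Your high-level plan --- freeze at $T_e$ so (\ref{eq:stopping}) applies, compute the expected increase of $W_{A,j}$ over the block, and choose $e(t)$ so that the decrement in the deterministic trajectory term dominates --- is the right skeleton and matches the paper's. But two of the details you propose are off in a way that matters.

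First, you write that the core computation is the expected number of vertices deleted from $Y_A$ in each underlying step and that one should then ``sum over the $3d+1$ underlying steps.'' This misreads the role of $W_{A,j}$. By definition $W_{A,j}$ only counts removals occurring at rounds congruent to $j$ modulo $3d+1$, so $W_{A,j}\bigl((3d+1)(\ell+1)+j\bigr) - W_{A,j}\bigl((3d+1)\ell+j\bigr)$ is exactly the number of vertices removed from $Y_A$ at the single step $(3d+1)(\ell+1)+j$, not a sum over the block. If you were summing removals over the whole block you would recover $Y_A(i)-Y_A(i+3d+1)$, which is the full $\sum_j$ change and defeats the entire point of the subsequence decomposition. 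The whole reason the subsequences exist is so that only the \emph{last} step of each block is charged to $W_{A,j}$, giving the intermediate $3d$ steps time to move the terminal face away from where it was at the start of the block.

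Second, your plan relies on a quantitative statement that the distribution of the terminal $(d-1)$-face is ``nearly uniform'' after $3d+1$ steps, and you identify bounding this discrepancy as the key obstacle. The paper never proves such a uniformity statement, and it is not clear that one holds in a useful form. Instead, the paper bounds from above the conditional probability that a \emph{fixed} not-yet-covered $(d-1)$-simplex $\sigma$ is added to $C$ at step $i+3d+1$, by counting the number of admissible sequences of choices $\psi(1),\dots,\psi(3d+1)$ in the block (the event is $\psi(3d+1)\in\sigma$ and $\sigma\subset\{\psi(2d+1),\dots,\psi(3d+1)\}$). Working backwards from $x=3d+1$ to $x=1$, the number of available vertices at each step is some $Y_{A'}(i)$ for a $(d-2)$-complex $A'$ determined by which $(d-2)$-faces of $\lk(x)$ in the straight corridor have already been mapped; (\ref{eq:stopping}) gives two-sided bounds on each such $Y_{A'}(i)$ since $A'\in\mathcal A$. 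The careful case analysis showing that for $x=2d,\dots,3$ one always gets $|A'|=d$ (a simplex boundary), while $x=2$ gives $|A'|=2d-1$ and $x=1$ gives $|A'|=d^2$, is the combinatorial heart of the proof; it determines the exponents in the probability estimate and in turn the power $p^{-d^2}$ inside $e(t)$. Your proposal does not engage with this counting, and without it you cannot identify the constant $10d+11$ or the exponent $d^2$ in the error function, which are exactly what the differential inequality $e'(t)\ge \frac{(10d+11)d^3 d!\,e(t)}{(1-dd!t)^{d^2}}$ requires. The ``forgetting'' you invoke is baked into this direct counting --- one does not need a separate uniformity lemma --- so your proposed hard step is both unnecessary and not the route the paper takes.
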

\noindent
We prove Lemma~\ref{supermartigaleCondition} below. Assuming that this Lemma holds, we apply the following version of the Hoeffding inequality, which is 
Lemma~7 of \cite{BohmanTriangleFree}, to conclude that $Z_{A,j}( \lfloor i_{\rm end}/(3d+1) \rfloor) < 0$ with overwhelming probability for any fixed $A$ and $j$. 
We say that a sequence of random variables $X_1, X_2, ...$ is {\em $(\eta, N)$ bounded} if 
$-\eta \leq X_{i + 1} - X_i \leq N$.
\begin{lemma}\cite[Lemma 7]{BohmanTriangleFree}
\label{lem:HA}
Suppose $\eta \leq N/10$ and $a < m\eta$. If $0 \equiv X_0, X_1, ...$ is an $(\eta, N)$-supermartingale then 
\[\Pr(X_m \geq a) \leq \exp \left\{ -\frac{a^2}{3\eta m N} \right\}.\]
\end{lemma}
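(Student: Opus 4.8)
The plan is to prove Lemma~\ref{lem:HA} by the standard exponential-moment (Bernstein-type) argument, tracking constants carefully so that the denominator comes out to be exactly $3\eta m N$. Let $(\mathcal{F}_k)_{k\ge 0}$ be the filtration of the supermartingale and write $\Delta_k = X_k - X_{k-1}$, so that $-\eta \le \Delta_k \le N$ almost surely and $\E[\Delta_k \mid \mathcal{F}_{k-1}] \le 0$. Since each $X_k$ is bounded it is integrable, so for a parameter $\lambda > 0$ to be chosen later, Markov's inequality gives $\Pr(X_m \ge a) \le e^{-\lambda a}\,\E[e^{\lambda X_m}]$, and the problem reduces to controlling the exponential moment $\E[e^{\lambda X_m}]$. (The degenerate case $\eta = 0$, which forces $a < 0$ and makes the claim vacuous, is dispatched at the start.)

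The heart of the argument is a one-step estimate: for every $k$,
\[ \E\!\left[e^{\lambda \Delta_k} \mid \mathcal{F}_{k-1}\right] \le \exp\!\left\{ c\,\lambda^2 \eta N \right\}, \qquad c = \tfrac{3}{4}, \]
valid whenever $\lambda N < \tfrac23$. I would obtain this from two ingredients. First, a conditional second-moment bound: splitting $\Delta_k = \Delta_k^+ - \Delta_k^-$ and using the supermartingale property to deduce $\E[\Delta_k^+ \mid \mathcal{F}_{k-1}] \le \E[\Delta_k^- \mid \mathcal{F}_{k-1}] \le \eta$, together with the pointwise bounds $(\Delta_k^+)^2 \le N\Delta_k^+$ and $(\Delta_k^-)^2 \le \eta\Delta_k^-$, yields $\E[\Delta_k^2 \mid \mathcal{F}_{k-1}] \le N\eta + \eta^2 \le \tfrac{11}{10} N\eta$, where the last inequality is exactly where the hypothesis $\eta \le N/10$ is used. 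Second, the elementary identity $e^x = 1 + x + x^2\phi(x)$ with $\phi(x) = \int_0^1 (1-s)e^{sx}\,ds$, which satisfies $\phi'(x) = \int_0^1 (1-s)s\,e^{sx}\,ds > 0$ and so is strictly increasing on $\R$; since $\lambda \Delta_k \le \lambda N$ we get $\phi(\lambda\Delta_k) \le \phi(\lambda N)$, and combining with $\E[\Delta_k \mid \mathcal{F}_{k-1}] \le 0$ gives $\E[e^{\lambda\Delta_k}\mid\mathcal{F}_{k-1}] \le 1 + \tfrac{11}{10}\phi(\lambda N)\lambda^2 N\eta \le \exp\{\tfrac{11}{10}\phi(\lambda N)\lambda^2 N\eta\}$. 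When $\lambda N < \tfrac23$ one has $\tfrac{11}{10}\phi(\lambda N) \le \tfrac{11}{10}\phi(\tfrac23) < \tfrac34$, giving the displayed bound with $c = \tfrac34$.

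Next I would telescope. The process $M_k := \exp\{\lambda X_k - c\lambda^2\eta N\,k\}$ is a supermartingale by the one-step bound, so $\E[M_m] \le M_0 = 1$ (using $X_0 = 0$), hence $\E[e^{\lambda X_m}] = e^{c\lambda^2\eta N m}\,\E[M_m] \le e^{c\lambda^2\eta N m}$. Plugging back, $\Pr(X_m \ge a) \le \exp\{-\lambda a + c\lambda^2\eta N m\}$. Finally I would take $\lambda = \dfrac{2a}{3\eta N m}$; then $\lambda N = \dfrac{2a}{3\eta m} < \dfrac23$ precisely because $a < m\eta$, so $c = \tfrac34$ is admissible, and the exponent becomes $-\dfrac{2a^2}{3\eta N m} + \dfrac34\cdot\dfrac{4a^2}{9\eta N m} = -\dfrac{a^2}{3\eta N m}$, which is the claimed bound.

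I expect the only genuine subtlety to be the constant-chasing in this last step: one has to choose $\lambda$ so that $\lambda N$ stays inside the regime ($\lambda N < \tfrac23$) where $\tfrac{11}{10}\phi(\lambda N) \le \tfrac34$, and this is exactly what the two hypotheses $a < m\eta$ and $\eta \le N/10$ are for — without $\eta \le N/10$ the conditional second moment is only $\le 2N\eta$, forcing $\phi(\lambda N) \le \tfrac38 < \phi(0)$, which is impossible. Everything else is routine manipulation of conditional expectations and the convexity inequality $1+x \le e^x$.
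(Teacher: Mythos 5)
Your proof is correct. Note first that the paper does not prove this lemma at all --- it is imported verbatim as Lemma~7 of the cited triangle-free-process paper --- so there is no in-paper argument to compare against; what you have written is a self-contained Bernstein-type derivation, and the constant-chasing checks out: $\E[\Delta_k^2\mid\mathcal{F}_{k-1}]\le N\eta+\eta^2\le\tfrac{11}{10}N\eta$ uses $\eta\le N/10$ exactly as you say, $\phi(2/3)=(e^{2/3}-5/3)\cdot\tfrac94\approx 0.6324$ so $\tfrac{11}{10}\phi(2/3)<\tfrac34$, the choice $\lambda=2a/(3\eta Nm)$ satisfies $\lambda N<2/3$ precisely because $a<m\eta$, and the exponent collapses to $-a^2/(3\eta mN)$. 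Two minor points. First, your argument requires $a>0$ in order to take $\lambda>0$; this is implicit in the lemma as used (it is only ever applied with $a=\Omega(n^{3/4})$), but you should state it rather than fold it into the $\eta=0$ remark. Second, that remark is slightly off: with $\eta=0$ and $a<0$ the constant process $X_k\equiv 0$ has $\Pr(X_m\ge a)=1$ while the right-hand side (reading $a^2/0$ as $+\infty$) is $0$, so the degenerate case is not ``vacuous'' so much as outside the intended scope of the statement; just assume $\eta>0$ and $0<a<m\eta$ and move on. Neither point affects the substance, and your route (conditional second-moment bound plus the monotone remainder function $\phi$) is a clean alternative to the Hoeffding-style convexity/chord bound more commonly used for two-sided increment ranges.
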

\noindent
We apply this lemma to $Z_{A, j}(1), Z_{A, j}(2),..., Z_{A,j}(m)$ where $m$ is the largest integer so that  $(3d + 1)m + j \leq  i_{\rm end}$.
Consider $\ell <m$ and set
$t = ((3d+1)\ell +j)/n^d$. Note that, as
$e(t) = \exp\{ (10d+11) (1 - dd!t)^{-d^2})$ and $t \leq \frac{1}{dd!} - (\log n)^{-\epsilon}$ where $ \epsilon < 1/d^2$ we have $e'(t), e''(t) = n^{o(1)}$. Applying this bound on the second derivative we have
\begin{eqnarray*}
Z_{A, j}(\ell + 1) - Z_{A, j}(\ell) &=& W_{A, j}((3d + 1)(\ell + 1) + j) - W_{A, j}((3d + 1)\ell + j)  \\
&&  \ \ - \frac{1}{3d + 1} \cdot \frac{3d+1}{n^d} \left(n|A|p^{|A| - 1}(dd!) + \frac{ n^{3/4}e'(t)}{2} \pm O(n^{-d +1)})\right).
\end{eqnarray*}
Note that 
\[0 \leq W_{A, j}((3d + 1)(\ell + 1) + j) - W_{A, j}((3d + 1)\ell + j) \leq d+1\]
since $W_{A, j}(i)$ is non-decreasing and at most $d+1$ vertices are among the $(d - 1)$-faces that are deleted (i.e. added to $ C_i$) in each step of the process. On the other hand
\begin{eqnarray*}
\frac{1}{n^d} \left(n|A|p^{|A| - 1}(dd!) + \frac{n^{3/4}e'(t)}{2} \pm O(n^{-d +1)})\right).
\end{eqnarray*}
is nonnegative and at most $ O(n^{1-d})$.
Thus for $n$ sufficiently large we can apply Lemma~\ref{lem:HA} with $N = d+1$ and $\eta = C/n^{d - 1}$ for $C$ a large constant:
\[\Pr(Z_{A, j}(m) - Z_{A,j}(0) \geq a) \leq \exp\left(-\frac{a^2 n^{d - 1}}{3C(d+1) m}\right).\]
Recalling that $Z_{A, j}(0) = -\Omega(n^{3/4})$, we have 
\[\Pr(Z_{A, j}(m) \geq 0) \leq \exp\left\{- \Omega\left( \frac{n^{3/2 + d - 1}}{m}\right) \right\} 
= \exp\left\{ -\Omega\left( n^{1/2} \right) \right\}.\]
So we have $Z_{A, j}(m) < 0$ with overwhelming probability. It remains to prove the supermartigale condition.

\begin{proof}[Proof of Lemma \ref{supermartigaleCondition}]
In the interest of clarity, we write the error function as $ e(t)n^\alpha/2$, rather than
$ \exp\{ (10d+11) ( 1 - tdd!)^{-d^2} \} n^{3/4}/2$, throughout
this argument.  This is done in an effort to highlight how these parameters are chosen. Indeed, the
function $ e(t)$ is chosen to grow just quickly enough to ensure that the supermartingale condition is
maintained.  The power $\alpha$ needs to be large enough for the concentration inequality applied
above while also being small enough to maintain the supermartingale condition.
We also note that there are a number of small additional error terms
that appear in this calculation. We absorb these with room to spare by replacing $ e(t)n^\alpha/2$ with $e(t)n^\alpha$ in the expected value estimate.

Recall that ${\mathcal A}$ is the set of all $(d - 2)$-subcomplexes $A$ of $K_n^d$ so 
that $ v_A \le 2d$ and  $|A| \leq d^2$. For convenience of notation at step $i = (3d + 1) \ell + j$ we let  $\psi(a) = \phi(i+a)$ for $ a = \dots, -d, -(d-1), \dots, i+ 3d +1$.  So $ \{ \psi(-d), \psi(-d+1), \dots, \psi(0) \}$  is the simplex we start from and $ \psi(1), \psi(2), \dots, \psi(3d+1)$ are the vertex choices made in the ensuing $3d+1$ steps of the process. The process assigns $\psi(1), \psi(2), ..., \psi(3d + 1)$ one at a time. 

We begin by estimating the probability that on the $(3d + 1)$st step we cover a fixed $(d - 1)$-simplex $\sigma$ that has not yet been added to the path (i.e. we compute the probability
$ \sigma \in C_{i + 3d +1} \setminus C_{i+3d}$ for  $\sigma \notin C_i$.) Note that this event
is equivalent to the event $ \psi(3d+1) \in \sigma$ and $ \sigma \subset \{ \psi(2d+1), \psi(2d+2), \dots, \psi(3d+1) \}$. We estimate this probability
under the
assumption $ i < T_e$, producing an estimate by bounding the number of ways the sequence
$ \psi(1), \psi(2), \dots, \psi(3d+1) $ can be chosen in agreement with this event.  We start
with the end of the sequence.  We have $dd!$ ways to assign the elements of $ \sigma $  to the elements of the sequence $ \psi(2d+1), \psi(2d+1), \dots, \psi(3d+1)$ so that $ \psi(3d+1)$ is assigned an element of $\sigma$. 
From here we choose the image of the unmapped vertex $b$ from among $2d + 1, ..., 3d$. There are 
$ Y_A(i)$ ways to
choose the image of $b$, where is $A$ is the boundary of $\sigma$. 
(Note that it is possible that some of the vertices counted in $ Y_A(i)$ might not still be available because 
these vertices are chosen in
the previous $2d$ steps and because some $(d-1)$-faces are selected in the steps between step $i$ and step $i+3d+1$, but the number of such vertices is at most $ O(1)$ and we can easily absorb this in the change in the error term discussed above.)
We then proceed to bound the number of possible choices 
for $ \psi(x)$ for $ x= 2d, 2d-1, \dots, 1$.
%
%

Observe that in $SC_d(\infty)$ every vertex link is $SC_{d - 1}(2d)$ and the number of choices we have to map a vertex $x$ in $2d, 2d-1, \dots, 1$ depends on the collection of $(d-2)$-faces of $\lk(x)$ (within the infinite path) have already been mapped. 
For $x = 2d, 2d-1, \dots, 3$ the only $(d-1)$-faces of the straight corridor that include $x$ and vertices whose images have already been determined consist of $x$ and the $(d-1)$-element subsets of $\{ x+1, x+2, \dots, x+d \}$. Thus, the number of choices for $x$ is once again $ Y_A(i)$ where $ A$ is the boundary of a $(d-1)$-simplex. The remaining two case, $x=2$ and $x=1$, are more complicated as in these cases there are $(d-1)$-faces that
include $x$ and elements whose images have already been determined and are less than $x$. Indeed, when $x=1$, then the number of possible choices for $y = \psi(x)$ is at most $Y_{A}(i)$
where $ A$ is the collection of $ (d-2)$-faces in the image of $ \psi(-d+1), \dots, \psi(0), \psi(2), \dots, \psi(d+1)$, which can viewed as the collection of

$(d - 2)$-faces in the $(d - 1)$-dimensional straight corridor on $[2d]$. 
It is straightforward to see that the number of codimension $k$ faces in a $D$-dimensional straight corridor on $N$ vertices is
\[f_{D - k}(SC_D(N)) = \binom{D}{D - k + 1} + (N - D)\binom{D}{k}.\]
So, for $x = 1$ the number of $(d - 2)$-faces of $ \lk{(x)} $ that have already been mapped when we assign $ \psi(x)$ is $d^2$. The need for this estimate is the reason that we track $Y_A$ for $A$ with $v_A$ as large as $2d$ and $|A|$ as large as $d^2$. Now we turn to the final case, which is $x = 2$. Given the images $\psi(-d), \dots, \psi(0), \psi(3), \dots \psi(3d+1)$, the images of $(d - 2)$-faces of $ \lk{(x)}$ are naturally partitioned into those that intersect $ \{\psi(-d), \psi(-d+1), \dots, \psi(0)\} $ and
those that do not. There are exactly $d-1$ such faces in the first category as such a face is determined by its minimum element. The faces in the second category are simply the boundary of a $(d-1)$-face. So, for $x=2$ there are $2d-1$ many $(d-2)$-faces of $ \lk{(x)}$ whose images have
been set when we choose $ \psi(x)$.
Putting all of this together, and noting that every simplicial complex $A$ considered here is in the collection ${\mathcal A}$, we see that the number of choices for $ \phi(i+1), \dots, \phi(i+3d+1)$ so that 
$ \sigma \in C_{i +3d+1} \setminus C_{i+3d} $ is 
\[ dd!(np^d \pm n^{\alpha} e(t))^{2d - 1} (np^{2d - 1} \pm n^{\alpha}e(t))(np^{d^2} \pm n^{\alpha}e(t)).\]
It follows that we have
%
\begin{equation*}
\begin{split}
{\mathbb P} \left( \sigma \in C_{i+3d+1} \setminus C_{i+3d} \mid i \right.  & \left. < T_e \right) \\
&= \frac{dd!(np^d \pm n^{\alpha} e(t))^{2d - 1} (np^{2d - 1} \pm n^{\alpha}e(t))(np^{d^2} \pm n^{\alpha}e(t))}{(np^{d} \pm n^{\alpha} e(t))^{3d + 1}} \\
& \leq dd!\frac{(np^d + n^{\alpha} e(t))^{2d - 1} (np^{2d - 1} + n^{\alpha}e(t))(np^{d^2} + n^{\alpha}e(t))}{(np^{d} - n^{\alpha} e(t))^{3d + 1}}\\
&\leq dd!\left(\frac{1}{n^d p} + \frac{(2d+2) e(t) }{n^{d+1 - \alpha}p^{d^2+1}} +
\frac{ (3d+2) e(t)}{ n^{d+1- \alpha} p^{d+1}} \right) \\
&\leq dd!\left(\frac{1}{n^d p} + \frac{(5d+4) e(t) }{n^{d+1 - \alpha}p^{d^2+1}} \right). 
\end{split}
\end{equation*}
for all $\sigma \notin C_i$.
It follows that we
\[ {\mathbb E}[ W_{A,j}(i + 3d +1) - W_{A,j}(i) \mid {\mathcal F}_i] \le |A|Y_A(i) dd!\left(\frac{1}{n^d p} + \frac{(5d+4) e(t) }{n^{d+1 - \alpha}p^{d^2+1}} \right) \]
%
Recalling that we set
$Z_{A, j}(\ell) = W_{A, j}((3d + 1) \ell + j) - \frac{n(1 - p^{|A|}) + n^{\alpha}e(t)/2}{3d + 1}$, 
we have 
\begin{eqnarray*}
\mathbb{E}(Z_{A, j}(\ell + 1) - Z_{A, j}(\ell) \mid \mathcal{F}_i) &\leq& |A|Y_A(i)  
 dd!\left(\frac{1}{n^d p} + \frac{(5d+4) e(t) }{n^{d+1 - \alpha}p^{d^2+1}} \right)\\
%
&&  \ \ - \frac{ 3d+1}{ n^d} \cdot \frac{n|A| p^{|A| - 1} dd! + n^{\alpha}e'(t)/2}{3d + 1} + O(n^{-(2d - 1)})  \\
 &\leq& |A|(np^{|A|} + n^{\alpha}e(t)) dd!  
  \left(\frac{1}{n^d p} + \frac{(5d+4) e(t) }{n^{d+1 - \alpha}p^{d^2+1}} \right)\\
&&  \ \ - \frac{n|A| p^{|A| - 1} dd! + n^{\alpha}e'(t)/2}{ n^d} + O(n^{-(2d - 1)})  \\
&\leq& \frac{dd!|A| e(t)}{n^{d - \alpha} p} + \frac{(5d+4)|A| dd! e(t)}{n^{d - \alpha} p^{d^2 + 1 - |A|}} - \frac{e'(t)}{ 2n^{d -\alpha}} + O\left(  \frac{ e(t)^2}{n^{d + 1 - 2\alpha }p^{d^2 + 1}} \right) \\
&\leq& \frac{(5d+5) d^3 d! e(t)}{n^{d - \alpha} p^{d^2}} - \frac{e'(t)}{ 2n^{d -\alpha}} + O\left(  \frac{ e(t)^2}{n^{d + 1 - 2\alpha }p^{d^2 + 1}} \right).
\end{eqnarray*}
To make this expected difference negative we choose $e(t)$ so that
\[e'(t) \geq \frac{(10d+11)d^3d!e(t)}{p^{d^2}} =  \frac{(10d+11)d^3d!e(t)}{ (1 - dd!t)^{d^2}}.\]
So it suffices to take 
$$e(t) = \exp\left\{ (10d+11) (1 - dd!t)^{-d^2} \right\}. $$ 
Note that so long as $t \leq \frac{1}{dd!} - \omega((\log (n))^{-1/d^2})$ we have $ e(t) = n^{o(1)}$.
%
%
%
%

\end{proof}

\section{Lower bound for pseudomanifolds}
For the lower bound of Theorem \ref{pmdiametertheorem}, we use a modified path mapping process. We observe that for each $N$ and $d$, the boundary of $SC_{d + 1}(N)$, denoted $\partial SC_{d+1}(N)$ is a $d$-sphere. If we can map $\partial SC_{d + 1}(N)$ into $K_n^d$ so that no $(d - 1)$-faces are ever repeated then its image is a $d$-complex on $n$ vertices with diameter equal to the diameter of $\partial SC_{d + 1}(N)$, and more over its image is a pseudomanifold since the map is injective on $(d -1)$-faces and on $d$-faces. Mapping $\partial SC_{d + 1}(N)$ into $K_n^d$ is also the idea of the pseudomanifold case of \cite{CriadoNewman}, the difference though is that we find our map using the differential equations method for dynamic concentration. A sufficient lower bound on the diameter of $\partial SC_{d + 1}(N)$ is already known.
\begin{lemma}\cite[Lemma 3.2]{CriadoNewman}\label{boundarydiameter}
The diameter of $\partial SC_{d + 1}(N)$ is at least $\frac{d}{d + 1} N - d - 1.$
\end{lemma}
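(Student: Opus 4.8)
The plan is to exhibit a real-valued height function $q$ on the facets ($d$-faces) of $\partial SC_{d+1}(N)$ which changes by at most $d+1$ along every edge of the dual graph, together with two facets whose $q$-values differ by exactly $d(N-d-1)$; the diameter is then at least $\frac{d(N-d-1)}{d+1}=\frac{d}{d+1}N-d$, which is more than $\frac{d}{d+1}N-d-1$.

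First I would record two elementary facts. Since $\partial SC_{d+1}(N)$ is a $d$-sphere it is a pseudomanifold, so every $(d-1)$-face lies in exactly two facets; hence two facets $\tau,\tau'$ are adjacent in the dual graph exactly when $|\tau\cap\tau'|=d$, and then $\tau'=(\tau\setminus\{v\})\cup\{w\}$ where $v$ is the unique element of $\tau\setminus\tau'$, $w$ the unique element of $\tau'\setminus\tau$, and $\rho:=\tau\cap\tau'$ is the shared ridge. Second, every face of $SC_{d+1}(N)$ lies in some facet $[k,k+d+1]$, so every $d$-face $\tau$ of $\partial SC_{d+1}(N)$ satisfies $\max\tau-\min\tau\le d+1$.

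Next I would define $q(\tau)=\bigl(\sum_{v\in\tau}v\bigr)-\min_{v\in\tau}v$, the sum of the $d$ largest vertices of $\tau$, and prove the key inequality $|q(\tau)-q(\tau')|\le d+1$ for adjacent $\tau\sim\tau'$. With $a=\min\tau$ and $a'=\min\tau'$ one has $q(\tau')-q(\tau)=(w-v)-(a'-a)$, and I would split into cases according to whether $v=a$ and whether $w=a'$. If $v\ne a$ and $w\ne a'$ then $a=\min\rho=a'$, so the difference equals $w-v$ with $v,w$ both in the length-$(d+1)$ interval $[\min\rho,\min\rho+d+1]$. If $v=a$ then $a'=\min\rho$ and the difference equals $w-\min\rho\in[0,d+1]$ since $w\in\tau'\subseteq[a',a'+d+1]$; the case $w=a'$ is symmetric and gives $\min\rho-v\in[-(d+1),0]$; and if both hold the difference is $0$. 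So $|q(\tau)-q(\tau')|\le d+1$ in every case. Finally, $\sigma_0:=[1,d+1]$ and $\sigma_1:=[N-d,N]$ each lie in exactly one facet of $SC_{d+1}(N)$ and so are facets of $\partial SC_{d+1}(N)$; a direct computation gives $q(\sigma_0)=\binom{d+2}{2}-1$ and $q(\sigma_1)=d(N-d)+\binom{d+1}{2}$, whence $q(\sigma_1)-q(\sigma_0)=d(N-d-1)$ using $\binom{d+2}{2}-\binom{d+1}{2}=d+1$. Dividing by $d+1$ bounds the dual-graph distance from $\sigma_0$ to $\sigma_1$ from below, and the lemma follows.

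The step I expect to be the crux is the choice of $q$ and the verification of its Lipschitz property. The naive candidate $\sum_{v\in\tau}v$ is not good enough: the dual graph of $\partial SC_{d+1}(N)$ has edges along which a single vertex of absolute change as large as $d+3$ is swapped --- these correspond to ``jumping over'' a facet of the straight corridor --- and they would only yield a bound of order $\frac{d+1}{d+3}N$. Subtracting $\min\tau$ precisely absorbs this excess, since (as the case analysis shows) any edge that swaps two vertices differing by more than $d+1$ must move the minimum vertex of one of the two facets. The remaining ingredients --- the sphere/pseudomanifold facts quoted from the setup and the two endpoint evaluations --- are routine.
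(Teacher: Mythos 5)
The paper does not actually reproduce a proof of this lemma --- it is quoted verbatim from \cite{CriadoNewman} --- so there is no in-text argument to compare against. Evaluated on its own terms, your potential-function proof is correct, self-contained, and in fact establishes the slightly stronger bound $\mathrm{diam}\,\partial SC_{d+1}(N)\ge \frac{d}{d+1}N-d$.

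A few checks worth recording. The facets of $\partial SC_{d+1}(N)$ are exactly $[1,d+1]$, $[N-d,N]$, and $[k,k+d+1]\setminus\{v\}$ with $k+1\le v\le k+d$; each is a $(d+1)$-set contained in some window $[k,k+d+1]$, so $\max\tau-\min\tau\le d+1$ holds for every facet, as you use. The identity $q(\tau')-q(\tau)=(w-v)-(a'-a)$ with $\rho=\tau\cap\tau'$, $v$ the element of $\tau\setminus\rho$, $w$ the element of $\tau'\setminus\rho$ is immediate, and your four-way case split is exhaustive and each case lands in $[-(d+1),d+1]$: in the case $v\ne a$, $w\ne a'$ one gets $a=a'=\min\rho$ and both $v,w\in[\min\rho,\min\rho+d+1]$; in the case $v=a$, $w\ne a'$ one gets $a'=\min\rho$ and $q(\tau')-q(\tau)=w-\min\rho\in[0,d+1]$; the case $w=a'$ is the mirror; and if $v=a$, $w=a'$ the change is $0$. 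The endpoint evaluations give $q(\sigma_1)-q(\sigma_0)=d(N-d-1)$, so $\mathrm{dist}(\sigma_0,\sigma_1)\ge d(N-d-1)/(d+1)=\frac{d}{d+1}N-d$, which implies the claimed bound with room to spare. Your motivating remark is also accurate: the dual graph of $\partial SC_{d+1}(N)$ does contain edges swapping vertices that differ by $d+3$ (e.g., for $d=2$, the edge between $\{k,k+1,k+3\}$ and $\{k-2,k,k+1\}$ across the ridge $\{k,k+1\}$), so the naive potential $\sum_{v\in\tau}v$ would only give a weaker constant; subtracting $\min\tau$ is exactly the fix needed because any such long swap must also move the minimum.
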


We define the $d$-pseudomanifold mapping process to be the random process that maps $\partial SC_{d + 1}(\infty)$ into $K_n^d$ one vertex at a time subject to the rule that no $(d - 1)$-face is repeated. As in the previous case we let $C_i$ be the collection of $ (d-1)$-faces that appear in the image of the straight corridor after $i$ steps of the process. Furthermore, we again let $Y_A(i)$ for $A$ a $(d - 2)$-subcomplex be the number of vertices $v$ so that for all $\sigma \in A$, $\sigma \cup \{v\}$ is not in $C_i$. We will again
apply the intuition that these variables resemble their counterparts in a Linial--Meshulam random $(d - 1)$-complex on $n$ vertices.
We will show that these approximations are sufficiently close as long as $i$ is far enough below $\binom{d + 1}{2}d! n^d$. The precise stopping time is given later. 

As above, we let $W_{A, j}(i)$ for $j \in \{0, ..., 3(d + 1)\}$ and $A$ a bounded $(d - 2)$-subcomplex of $K_n^d$ be the number of vertices that are removed from $Y_A$ by step $i$ in rounds congruent to $j$ mod $3(d + 1) + 1$. For the pseudomanifold case we again establish trajectories for $W_{A, j}(i)$ for all $A$ in some reasonable collection $\mathcal{A}$. Here it suffices to take $\mathcal{A}$ to be all $(d - 2)$-subcomplexes $A$ of $K_n^d$ with $ v_A \le 2(d+1)$ and $|A| \leq d + (d + 2) \binom{d}{2}$. We take
\[t := i/n^d \ \ \ \ \text{ and } \ \ \ \ 
p := 1 - \binom{d + 1}{2} d! t.\]

The choice of $\binom{d + 1}{2}d!$ in the definition of $p$ comes from the volume estimate in the mapping process. Every time we map a new vertex we uncover $\binom{d + 1}{2}$ new $(d - 1)$-faces since we add all $(d - 1)$-faces of a $(d + 1)$-simplex that happen to contain some fixed vertex of that simplex. So trivially the number of steps in the process cannot exceed:
\[\binom{n}{d}/\binom{d + 1}{2}.\]

For an error bound $e(t)$, we let $T = T_e$ be the first step $i$ of the random pseudomanifold mapping process where there is some $A$ with $v_A \le 2(d+1)$ and $|A| \leq d + (d + 2)\binom{d}{2}$ and $j \in \{0, ..., 3(d + 1)\}$ so that $W_{A, j}$ does not fall in the interval
\[ \left[  \frac{1}{3d+4} \left(n \left(1 - p^{|A|} \right) - \frac{n^{3/4} e(t)}{2} \right) ,   \frac{1}{3d+4} \left( n  \left(1 - p^{|A|} \right) + \frac{n^{3/4} e(t)}{2} \right) \right]. \]

Our stopping time lemma is the following:
\begin{lemma}\label{pmStoppingTime}
Fix $d \geq 2$ and let $\epsilon < 2/(d^3 + d^2 - 2)$. We have
\[T_e \geq \left(\frac{1}{d!\binom{d + 1}{2}} - (\log n)^{-\epsilon} \right)n^d\]
with overwhelming probability where \[e(t) = \exp \left\{16d \left(1 - d!\binom{d + 1}{2}t\right)^{-(d^3/2 + d^2/2 - 1)}\right\}.\]
\end{lemma}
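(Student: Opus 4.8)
The plan is to rerun the machinery behind the proof of Lemma~\ref{KeyDiffEqLemma}, replacing $3d+1$ by $3(d+1)+1 = 3d+4$, $1-dd!t$ by $p = 1 - d!\binom{d+1}{2}t$, and the collection ${\mathcal A}$ by the enlarged family of $(d-2)$-complexes $A$ with $v_A \le 2(d+1)$ and $|A| \le d + (d+2)\binom{d}{2}$. For each such $A$ and each $j \in \{0,\dots,3(d+1)\}$ I would introduce the sequence
\[ Z_{A,j}(\ell) := W_{A,j}\big((3d+4)\ell + j\big) - \frac{n}{3d+4}\big(1 - p^{|A|}\big) - \frac{n^{3/4} e(t)}{2(3d+4)}, \]
stopped at time $T_e$ exactly as in Section~3, along with the companion sequence recording the lower endpoint of the target interval. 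Since there are only polynomially many triples $(A, j, \text{side})$, a union bound reduces matters to showing that each of the events $\{Z_{A,j}(\ell_{\max}) \ge 0\}$, with $\ell_{\max} := \big\lfloor(\tfrac{1}{d!\binom{d+1}{2}} - (\log n)^{-\epsilon})n^d/(3d+4)\big\rfloor$, and each lower-endpoint analogue, is overwhelmingly unlikely. Because $Z_{A,j}(0) = -\Omega(n^{3/4})$, this follows, once the supermartingale property is in hand, from Lemma~\ref{lem:HA} applied with $N = d+2$ (a single round of the pseudomanifold mapping process removes vertices only from the freshly revealed $(d+1)$-simplex, which has $d+2$ vertices) and $\eta = O(n^{-(d-1)})$, using that $e'(t), e''(t) = n^{o(1)}$ throughout the relevant range of $t$ when $\epsilon < 2/(d^3+d^2-2)$. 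Following the remark after Lemma~\ref{supermartigaleCondition}, I would present the lower-endpoint calculation in full and note that the upper-endpoint one is analogous; an argument identical to the deduction of Theorem~\ref{maintheorem} from Lemma~\ref{KeyDiffEqLemma}, now combined with Lemma~\ref{boundarydiameter}, then converts Lemma~\ref{pmStoppingTime} into the lower bound of Theorem~\ref{pmdiametertheorem}.

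The substance is the supermartingale condition, i.e.\ estimating $\mathbb{E}[W_{A,j}(i+3d+4) - W_{A,j}(i)\mid\mathcal{F}_i]$ for $i < T_e$. As in Section~3 this reduces to bounding, for a fixed $(d-1)$-simplex $\sigma\notin C_i$, the probability that $\sigma$ becomes closed at the $(3d+4)$-th step of a super-step, and I would do so by counting the assignments of the images $\psi(1),\dots,\psi(3d+4)$ compatible with that event, working backward from the end of the sequence. The new ingredient is the local structure of $\partial SC_{d+1}(\infty)$: every vertex link in $SC_{d+1}(\infty)$ is $SC_d(2(d+1))$, so every vertex link in $\partial SC_{d+1}(\infty)$ is $\partial SC_d(2(d+1))$, and each step of the process closes the $\binom{d+1}{2}$ many $(d-1)$-faces of the new $(d+1)$-simplex that contain the freshly mapped vertex. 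There are $d!\binom{d+1}{2}$ ways to place $\sigma$ into the last $d+2$ image positions; for each remaining position one records which $(d-2)$-faces of the relevant vertex link have already been mapped, and every resulting factor has the form $Y_{A'}(i)$ for some $A'$. The extreme case is the first position, where the whole link $\partial SC_d(2(d+1))$ is already determined and a direct count gives $f_{d-2}(\partial SC_d(2(d+1))) = d + (d+2)\binom{d}{2} = \tfrac12 d^2(d+1)$ — exactly why $\mathcal{A}$ is taken as above. Since we stopped at $T_e$, the pseudomanifold analogue of \eqref{eq:stopping} replaces each factor by $np^{|A'|} \pm n^{3/4}e(t) \pm O(1)$, and the expansion — carried out precisely as in Section~3 — yields a bound of the shape
\[ \Pr\big(\sigma\text{ closed at step }i+3d+4 \mid i < T_e\big) \;\le\; d!\binom{d+1}{2}\left(\frac{1}{n^d p} + O\!\left(\frac{e(t)}{n^{d+1/4}\,p^{\,d+(d+2)\binom{d}{2}+1}}\right)\right). \]

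Feeding this into $\mathbb{E}[W_{A,j}(i+3d+4)-W_{A,j}(i)\mid\mathcal{F}_i] \le |A|\,Y_A(i)\cdot(\text{above})$, substituting $Y_A(i) = np^{|A|}\pm n^{3/4}e(t)\pm O(1)$, and subtracting the deterministic drift $\tfrac{1}{n^d}\big(n|A|p^{|A|-1}d!\binom{d+1}{2} + n^{3/4}e'(t)/2\big)$ contributed by the trajectory terms of $Z_{A,j}$, the leading terms cancel, and after bounding the residual powers of $1/p$ over the relevant range of $|A|$ exactly as in Section~3 one is left (up to lower-order terms) with an expression of the form $c_d\,e(t)\,n^{-(d-3/4)}\,p^{-(d^3/2+d^2/2-1)} - e'(t)\,n^{-(d-3/4)}/2$ for a constant $c_d$ depending only on $d$. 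This is non-positive as soon as $e'(t)/e(t) \ge 16d\,\big(1 - d!\binom{d+1}{2}t\big)^{-(d^3/2+d^2/2-1)}$, which is precisely the differential inequality solved by the function $e(t)$ in the statement; and the condition $\epsilon < 2/(d^3+d^2-2) = 1/(d^3/2+d^2/2-1)$ is exactly what makes $e(t) = n^{o(1)}$ at $i = (\tfrac{1}{d!\binom{d+1}{2}} - (\log n)^{-\epsilon})n^d$, where $p = d!\binom{d+1}{2}(\log n)^{-\epsilon}$, so that $n^{3/4}e(t) < np^d$ there and the process has not yet stopped.

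I expect the main obstacle to be the combinatorial bookkeeping behind the displayed probability bound. The local structure of $\partial SC_{d+1}(\infty)$ is appreciably more intricate than that of $SC_d(\infty)$ used in Section~3: a single step closes $\binom{d+1}{2}$ faces distributed over several new $d$-faces rather than a single one, so one must determine with care, for each free image position in a super-step — and in particular at the positions near the seam with the initial simplex $\{\psi(-d-1),\dots,\psi(0)\}$, where $(d-2)$-faces of the link meeting that simplex enter the picture — exactly which $(d-2)$-faces of the corresponding copy of $\partial SC_d(2(d+1))$ have already been assigned, and then confirm that every subcomplex $A'$ so produced genuinely lies in $\mathcal{A}$, i.e.\ has at most $2(d+1)$ vertices and at most $d + (d+2)\binom{d}{2}$ faces of dimension $d-2$.
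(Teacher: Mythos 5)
Your proposal follows the paper's proof in essentially every respect: the same partitioning of rounds into $3d+4$ residue classes, the same enlarged family $\mathcal{A}$ with $|A| \le d + (d+2)\binom{d}{2} = d^3/2 + d^2/2$, the same identification of the vertex link as $\partial SC_d(2(d+1))$, the same backward count of the assignments $\psi(1),\dots,\psi(3d+4)$ giving factors of the form $Y_{A'}(i)$, and the same appeal to the submartingale/supermartingale version of Lemma~\ref{lem:HA}. One small slip worth correcting: you write the required differential inequality as $e'(t)/e(t) \ge 16d\,p^{-(d^3/2+d^2/2-1)}$, but the dominant error term gives $e'(t)/e(t) \gtrsim p^{-(d^3/2+d^2/2)}$; the exponent $d^3/2+d^2/2-1$ belongs inside the exponential in $e(t)$, since the logarithmic derivative raises the power of $1/p$ by one, and your stated $e(t)$ is in fact the correct one.
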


Lemmas \ref{boundarydiameter} and \ref{pmStoppingTime} are sufficient to prove the lower bound in Theorem \ref{pmdiametertheorem}.
\begin{proof}[Proof of lower bound in Theorem \ref{pmdiametertheorem}]
By Lemma \ref{pmStoppingTime} for any $\epsilon < 2/(d^3 + d^2 - 2)$, one can find a diameter-preserving quotient of 
\[\partial SC_{d + 1}\left(\left(\frac{1}{d!\binom{d + 1}{2}} - (\log n)^{-\epsilon}\right)n^d\right)\]
in $K_n^d$.
By Lemma \ref{boundarydiameter} such a subcomplex of $K_n^d$ has diameter at least 
\[\frac{d}{d + 1} \left(\frac{1}{d!\binom{d + 1}{2}} - (\log n)^{-\epsilon}\right)n^d - d - 1.\]
The lower bound of Theorem \ref{pmdiametertheorem} follows. 
\end{proof}

For fixed $\epsilon < 2/(d^3 + d^2 - 2)$, we let $i_{\text{end}} = \left(\frac{1}{d!\binom{d + 1}{2}} - (\log n)^{-\epsilon} \right)n^d$.
Let 
\[Z_{A, j}(\ell) := W_{A, j}((3d + 4)\ell + j) - \frac{1}{3d+4} \left(n \left(1 - p^{|A|} \right) - \frac{n^{3/4} e(t)}{2} \right).\]
We show that with overwhelming probability $Z_{A, j}(\ell) > 0$ for $i = (3d + 4)\ell + j$ at most $i_{\text{end}}$

For the case of simplicial complexes we carefully verified that the upper bound on the $W_{A, j}$'s hold with overwhelming probability, and then said that the lower bound argument is similar; here we do the opposite so that the reader sees careful proofs of both an upper bound and a lower bound.

\begin{lemma}\label{submartingaleCondition}
Let $(\mathcal{F}_i)_{i \geq 0}$ be the filtration defined by the pseudomanifold path-mapping process, $A \in \mathcal{A}$ and $j \in \{0,..., 3d + 3\}$. If $(\ell + 1)(3d + 4) + j \leq i_{\text{end}}$ then we have
\[\E(Z_{A, j}(\ell + 1) - Z_{A, j}(\ell) \mid \mathcal{F}_{j + \ell(3d + 4)}) \geq 0.\]
\end{lemma}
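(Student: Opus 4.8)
The plan is to mirror the proof of Lemma~\ref{supermartigaleCondition}, but tracking a lower bound on the one-step expected increment of $W_{A,j}$ rather than an upper bound. Write the error term as $e(t)n^\alpha/2$ with $\alpha = 3/4$ for bookkeeping clarity. As before, set $i = (3d+4)\ell + j$ and introduce $\psi(a) = \phi(i+a)$ for $a = -(d+1), \dots, 3d+4$, so that $\{\psi(-(d+1)), \dots, \psi(0)\}$ is the facet we start from and $\psi(1), \dots, \psi(3d+4)$ are the vertex choices made in the next $3d+4$ steps. Stopping the process at time $T_e$ lets us assume the analogue of (\ref{eq:stopping}) holds throughout, namely $Y_A(i) = np^{|A|} \pm \tfrac{n^{3/4}e(t)}{2} \pm O(1)$ for all $A \in \mathcal{A}$.

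First I would compute, for a fixed $(d-1)$-face $\sigma \notin C_i$, a \emph{lower} bound on $\Pr(\sigma \in C_{i+3d+4}\setminus C_{i+3d+3} \mid i < T_e)$. As in the simplicial case, this event is equivalent to $\psi(3d+4) \in \sigma$ together with $\sigma \subseteq \{\psi(2d+3),\dots,\psi(3d+4)\}$ --- here the relevant window has length $d+1$ because in $\partial SC_{d+1}(\infty)$ each vertex link is $\partial SC_{d}(2(d+1))$ and a $(d-1)$-face spans $d+1$ consecutive vertices. I would count the number of agreeing choices of $\psi(1),\dots,\psi(3d+4)$ from the end of the sequence: there are $(d+1)!/((d+1)!/d! \cdot \text{(something)})$ --- more precisely, the number of orderings of $\sigma$ placing a distinguished element last, times $Y_A(i)$ for the one unmapped vertex in the window (with $A = \partial\sigma$), times a product of $Y_{A'}(i)$ factors for the remaining positions $x = 2d+2, \dots, 1$. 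The positions near the start of the window require the larger complexes $A' \in \mathcal{A}$: the bottleneck position needs $A'$ equal to the $(d-2)$-faces of the $(d-1)$-straight-corridor-like configuration inside the link, which by the face-count formula $f_{D-k}(SC_D(N)) = \binom{D}{D-k+1} + (N-D)\binom{D}{k}$ accounts for the choices $v_A \le 2(d+1)$ and $|A| \le d + (d+2)\binom{d}{2}$. Using (\ref{eq:stopping}) with the \emph{lower} sign on each $Y$ factor and the \emph{upper} sign on the denominator $(np^d \pm n^\alpha e(t))^{3d+4}$ gives a bound of the form $\binom{d+1}{2}d!\bigl(\tfrac{1}{n^d p} - \tfrac{c_d e(t)}{n^{d+1-\alpha} p^{|A^*|+1}}\bigr)$ from below, where $|A^*| = d + (d+2)\binom{d}{2}$; note the coefficient $\binom{d+1}{2}d!$ replaces $dd!$ because covering $\sigma$ now also forces the companion facet on the other side.

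Summing over $\sigma$ ranging over the $|A|\,Y_A(i)$ faces of the form $\tau\cup\{v\}$ with $\tau \in A$, $v \in Y_A(i)$, and then subtracting the deterministic drift $\tfrac{3d+4}{n^d}\cdot\tfrac{1}{3d+4}\bigl(n|A|p^{|A|-1}\binom{d+1}{2}d! - \tfrac{n^\alpha e'(t)}{2}\bigr)$ coming from the definition of $Z_{A,j}$ (the sign of the $e'(t)$ term flips relative to the simplicial case since the error bracket enters $Z$ with a minus), the main terms $n|A|p^{|A|-1}\binom{d+1}{2}d!/n^d$ cancel and I am left with $\E[Z_{A,j}(\ell+1) - Z_{A,j}(\ell)\mid\mathcal F_i] \ge \tfrac{e'(t)}{2n^{d-\alpha}} - \tfrac{C_d e(t)}{n^{d-\alpha}p^{d^3/2 + d^2/2 - 1}} - O\bigl(\tfrac{e(t)^2}{n^{d+1-2\alpha}p^{\cdots}}\bigr)$. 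This is $\ge 0$ provided $e'(t) \ge \tfrac{16d\,e(t)}{(1-\binom{d+1}{2}d!t)^{d^3/2+d^2/2-1}}$, which is exactly the differential inequality solved by the stated $e(t) = \exp\{16d(1 - d!\binom{d+1}{2}t)^{-(d^3/2+d^2/2-1)}\}$; the constant $16d$ is chosen with room to absorb the $O(1)$ corrections (unavailable vertices from the last $2(d+1)$ steps and faces covered in intermediate steps) that I fold into the error by replacing $e(t)n^\alpha/2$ with $e(t)n^\alpha$. The condition $\epsilon < 2/(d^3+d^2-2)$ is precisely what makes $e(t) = n^{o(1)}$ at $i_{\text{end}}$, so $e',e'' = n^{o(1)}$ and the second-derivative/linearization step is justified.

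The main obstacle is the combinatorial count near the start of the window: unlike the simplicial straight corridor, in $\partial SC_{d+1}(\infty)$ the vertex links are themselves \emph{boundaries} of straight corridors, so the already-mapped $(d-2)$-faces of $\lk(x)$ for the first couple of positions $x$ have a more intricate structure, and I must verify that the corresponding complexes $A'$ all genuinely lie in $\mathcal{A}$ (this is what pins down $|A| \le d + (d+2)\binom{d}{2}$ and $v_A \le 2(d+1)$) and that the lower-bound estimate is not spoiled by the $(d-1)$-faces of $\sigma$ that wrap around to small indices. Getting the exponent $d^3/2 + d^2/2 - 1$ and the leading constant in the differential inequality exactly right --- so that the supermartingale/submartingale condition survives with room for the $O(1)$ slack --- is the delicate accounting that the rest of the argument hinges on.
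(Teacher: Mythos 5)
Your proposal follows the paper's route closely: lower-bound the probability that a fixed unassigned $(d-1)$-face $\sigma$ is covered at step $3d+4$ by counting agreeing sequences $\psi(1),\dots,\psi(3d+4)$ from the back, with a product of $Y_{A'}(i)$ factors over the earlier positions (whose exceptional cases near $x=1$ pin down the need for the large complexes in $\mathcal{A}$, with $|A^*| = d+(d+2)\binom{d}{2} = d^3/2+d^2/2$), sum over $\sigma$, subtract the drift from the definition of $Z_{A,j}$, and choose $e(t)$ so the leftover is nonnegative. That is exactly the paper's argument.

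Two small bookkeeping slips, neither of which changes the route. First, in $\partial SC_{d+1}(\infty)$ a $(d-1)$-face spans up to $d+2$ consecutive vertices, so the final window $\{\psi(2d+3),\dots,\psi(3d+4)\}$ has $d+2$ positions, not $d+1$; after placing $\sigma$ there remain \emph{two} free positions in that window, contributing factors $np^{d}\pm n^\alpha e(t)$ and $np^{\binom{d+1}{2}}\pm n^\alpha e(t)$ rather than a single $Y_{\partial\sigma}$ factor. Second, the differential inequality the paper actually needs has the form $e'(t)\gtrsim e(t)/p^{d^3/2+d^2/2}$ (exponent $m+1$, not $m$); the stated $e(t)=\exp\{16d\,p^{-(d^3/2+d^2/2-1)}\}$ satisfies this upon differentiation because the chain rule raises the exponent by one and supplies the factors of $m$ and $\binom{d+1}{2}d!$ you need. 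You also correctly flag, without carrying out, the case analysis for the already-mapped $(d-2)$-faces of $\lk(x)$ at positions $x=1,2,3$ --- the paper handles this by examining $j=-1,0,1,\ge 2$ and it is indeed the technically intricate step, but the structure you describe is right.
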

\begin{proof}
Suppose we are at step $i = (3d + 4)\ell + j$ and let $\mathcal{F}_i$ be the state of the path so far. We are thus at the boundary of a $(d + 1)$-face on vertices denoted $[\psi(-d - 1), ..., \psi(0)]$. At each step we choose $\psi(x)$ to be a vertex of $Y_A$ for $A$ the codimension 2 skeleton of $[\psi((x - 1) - d), ..., \psi(x - 1)]$. For $ i < T_e$
we bound from below the probability that $\sigma$ an unassigned $(d - 1)$-face is covered at step $(3d + 4)(\ell + 1)$ conditioned on a starting point at step $(3d + 4) \ell$. 

Let $[u_1, \dots, u_d]$ be a fixed face of $K_n^d$ that has not been assigned yet at step $(3d + 4)\ell$. We know $[\psi(- d  -1), \dots, \psi(0)]$ and we map vertices $1$, $2$, \dots, $3d + 4$ one at a time. At each step we add a cone over the codimension-2 skeleton of a $d$-simplex. In order for $[u_1, \dots, u_d]$ to be covered when we map vertex $3d + 4$ it must be the case that vertex $3d + 4$ and $d - 1$ of the vertices $\{2d + 3, ..., 3d + 3\}$ are mapped to $[u_1, ..., u_{d}]$. So we have $d$ choices for mapping $3d + 4$ and then have $\binom{d + 1}{2}$ choices to pick $d - 1$ vertices from among $\{2d + 3, ..., 3d + 3\}$ to map to the remaining $u_i$'s. Once these vertices are selected we have $(d - 1)!$ ways to map them. 

So we have $d!\binom{d + 1}{2}$ ways to map $\{2d + 3, \dots, 3d + 4\}$ so that $[u_1, \dots, u_d]$ is among the final $(d - 1)$-faces added. At that point we have two unmapped vertices among $\{2d + 3, \dots, 3d + 4\}$ to map. The image of the $d$-many mapped vertices among $\{2d + 3, \dots, 3d + 4\}$ at the point form a $(d - 1)$-simplex. We have at least $np^{d} - n^{\alpha}e(t)$ ways to pick the image of the first unmapped vertex from among $\{2d + 3, ..., 3d + 4\}$. At that point we have at least $np^{\binom{d+1}{2}} - n^{\alpha} e(t)$ ways to pick the second unmapped vertex from among $\{2d + 3, ..., 3d + 4\}$.  Now we map the vertices $1, ..., 2d + 2$ one at a time. Within $\partial SC_{d + 1}(\infty)$ the link of a vertex is $\partial SC_d(2(d + 1))$. When we go to map a vertex $x$ we will always have the first $d + 1$ vertices of the link are already assigned and toward the end some of the final $d + 1$ vertices of the link are assigned too. 

Similar to the simplicial complex case, we fix $x \in \{1, \dots, 2d + 2\}$ and we denote the vertices of $\lk(x)$, by $1, \dots, (d + 1), d + 2, \dots, 2d + 2$. The assigned vertices will always be $1, \dots, d + 1$, and some tail of $d + 2, \dots, 2d + 2$. Our task is to count $(d - 2)$-faces of the subcomplex of $\partial SC_{d}(2d + 2)$ obtained by removing vertices $d + 2$, \dots, $d + 2 + j$ for each $-1 \leq j \leq d$. In all cases we have $\binom{d + 1}{2}$ $(d - 2)$-faces from among the first $d + 1$ vertices.  In order to have even a single $(d - 2)$ face survive from the last $d + 1$ vertices we must have $d - j \geq d - 1$, so $j \leq 1$ must hold in order to be in one of the exceptional cases. Recall that there were two exceptional cases for simplicial complexes, here there will be three. If $j \geq 2$ then the number of $(d - 2)$-faces that survive is 
\[\binom{d + 1}{d - 1} = \binom{d + 1}{2}.\]

If $j = 1$ then we have one $(d - 2)$-face among $d + 4$, \dots, $2d + 2$. A $(d - 2)$-face of $SC_{d}(2d + 2)$ (or equivalently of its boundary) is a subset of size $d - 1$ from a sequence of consecutive $d + 1$ vertices. From this a ``crossing $(d - 2)$-face", that is a face with some vertices in $[1, d + 1]$ and some vertices in $[d + 2, 2d + 2]$, can be made by choosing $4 \leq i \leq d + 1$ as the starting vertex and taking $\{i, ..., i + d\} \setminus \{d + 2, d + 3\}$ as a $(d - 2)$-face. Thus if $j = 1$ there are $d - 2$ crossing $(d - 2)$-faces. So for $j = 1$ the number of $(d - 2)$ faces is:
\[\binom{d + 1}{2} + d - 2 + 1.\]

If $j = 0$ then there are $d$ choices for $(d - 2)$-faces from among $d + 3, \dots, 2d + 2$. For the crossing $(d-2)$-faces we observe that a crossing face can be uniquely identified by its starting vertex $i \leq d + 1$ so that $i + d \geq d + 3$ and its deleted vertex in $[i, i + d]$ different from $i$ and from $d + 3$. There are $d - 1$ choices for $i$, and then $d - 1$ choices for the other vertex to remove except that when $d + 2 = i + d - 1$ there is only one choice to build a crossing $(d-2)$-faces since we cannot pick the unique vertex on the other side of the gap to delete. Thus the number of crossing $(d - 2)$-faces is $(d - 1)^2 - 1$. So if $j = 0$ the number of $(d - 2)$-faces is
\[\binom{d + 1}{2} + d + (d - 1)^2 - 1.\]

 If $j = -1$, then the number of $d - 2$ faces is simply the number of codimension 2 faces in $SC_{d}(2d + 2)$, which we already know to be 
\[d + (d + 2)\binom{d}{2}.\]

 So putting this all together we have 3 special cases for $j$, and we obtain that the number of ways to choose images for $\{1, ..., 2d + 2\}$ is at least
\[\left(np^{\binom{d + 1}{2}} - n^{\alpha}e(t)\right)^{2d - 1}
\left(np^{d +(d + 2)\binom{d}{2}} - n^{\alpha} e(t) \right) 
\left( np^{\binom{d + 1}{2} + d +(d - 1)^2 - 1 } - n^{\alpha} e(t) \right)
\left( np^{\binom{d + 1}{2} + d - 1} - n^{\alpha} e(t) \right) 
.\]
Multiplying this by the 
\[\binom{d + 1}{2}d! (np^d - n^{\alpha}e(t))(np^{\binom{d + 1}{2}} - n^{\alpha}e(t))\]
for the lower bound on the number of ways to map $\{2d + 3, ..., 3d + 3\}$, and ignoring the error terms temporarily, we get that the probability that $[u_1, ..., u_d]$ is covered in the last step is roughly:
\[\binom{d + 1}{2} d! \frac{n^{2d + 4} p^{3d^3/2 + 7d^2/2 + 2d - 1}}{(np^{\binom{d + 1}{2}})^{3d + 4}} = d! \binom{d + 1}{2} \frac{1}{n^d p}.\]
This matches with our intuition that a comparison with the Linial--Meshulam model should hold.

%
Making this intuition precise though and keeping the error terms, we have the the probability that $[u_1, ..., u_d]$ is covered at step $3d + 4$ for $n$ sufficiently large is at least.
\[\binom{d + 1}{2}d! \frac{n^{2d + 4}p^{3d^3/2 + 7d^2/2 + 2d - 1} - (2d + 4) n^{2d + 3 + \alpha}p^{d^3 + 3d^2 + 2d - 1}e(t)}{n^{3d + 4}p^{\binom{d+1}{2}(3d + 4)} + (3d + 5)n^{3d + 3 + \alpha}p^{\binom{d + 1}{2}(3d + 3)}e(t)}.\]
For $n$ sufficiently large this is at least
\begin{multline*}
\binom{d + 1}{2} d! \left(\frac{1}{n^dp} - \frac{(2d + 5)n^{5d + 7 + \alpha}p^{5d^3/2 + 13d^2/2 + 4d - 1}e(t)}{n^{6d + 8}p^{\binom{d +1}{2}(6d + 8)}} \right) \\ \geq \binom{d+1}{2}d!\left(\frac{1}{n^dp} - \frac{(2d + 5)e(t)}{n^{d + 1  - \alpha}p^{d^3/2 + d^2/2 + 1}} \right).
\end{multline*}

As in the case for simplicial complexes, it follows that for $i < i_{\text{end}}$, 
\[ {\mathbb E}[ W_{A,j}(i + 3d +4) - W_{A,j}(i) \mid {\mathcal F}_i] \ge |A| Y_A(i)  \binom{d + 1}{2} d! \left( \frac{1}{n^dp} - \frac{(2d + 5) e(t)}{n^{d + 1 - \alpha} p^{d^3/2 + d^2/2 + 1}} \right). \]
We note in passing that there are a number of small error terms that are easily absorbed by replacing error terms of the form $ e(t)n^\alpha/2$ with $ e(t) n^\alpha$. These include the small changes in the collection of available $ (d-1)$-faces during the $3d+4$ steps of the process we analyze here and the vertices that are excluded because they appeared among the previous $2(d+1)$ vertices chosen by the process.  There is one more error term that is also absorbed in the same way. This error term (which is sometimes called `destruction fidelity') comes from the fact that the random variable $ W_{A,j}$ counts vertices while we calculate the expected change by summing over $ (d-1)$-faces. It could be the case that a given vertex $v$ is removed from $ Y_A$ by the elimination of two (or more) different $ (d-1)$-faces. We account for this by noting that the intersection of the sequences of choices for $ \psi(1), \dots \psi(3d+4)$ for two different faces is a lower order term because one of the choices is essentially removed to ensure that we have a sequence in the desired intersection.

Recalling that $Z_{A, j}(\ell) := W_{A, j}((3d + 4)\ell + j) - \frac{1}{3d+4} \left(n \left(1 - p^{|A|} \right) - n^{\alpha} e(t)/2 \right)$ we have that
\begin{eqnarray*}
\mathbb{E}[Z_{A, j}(\ell + 1) - Z_{A, j}(\ell) \mid \mathcal{F}_i] &\geq& |A| Y_A(i)  \binom{d + 1}{2} d! \left( \frac{1}{n^dp} - \frac{(2d + 5) e(t)}{n^{d + 1 - \alpha} p^{d^3/2 + d^2/2 + 1}} \right) \\
&& - n^{1 - d}|A|p^{|A| - 1}d! \binom{d + 1}{2} + n^{\alpha - d} e'(t)/2 - O\left( \frac{1}{n^{2d - 1}}\right) \\
&\geq& \frac{1}{n^d} \left(|A|(np^{|A|} - n^{\alpha} e(t)) \binom{d + 1}{2}d! \left(\frac{1}{p} - \frac{(2d + 5)e(t)}{n^{1 - \alpha}p^{d^3/2 + d^2/2 + 1}}\right)\right) \\
&& - n^{1 - d}|A|p^{|A| - 1}d! \binom{d + 1}{2} + n^{\alpha - d} e'(t)/2 - O\left( \frac{1}{n^{2d - 1}}\right) \\
&\geq& \frac{1}{n^d} \binom{d + 1}{2} d!|A| \left(-\frac{(2d + 5)e(t)p^{|A|}n^{\alpha}}{p^{d^3/2 + d^2/2 + 1}}  - \frac{n^{\alpha} e(t)}{p} + \frac{(2d + 5)(e(t))^2n^{2\alpha}}{np^{d^3/2 + d^2/2 + 1}}\right) \\
&& + n^{\alpha - d} e'(t)/2 - O\left(\frac{1}{n^{2d - 1}}\right). 
\end{eqnarray*}
We choose $e(t)$ so that $e'(t)$ is large enough for this expression to
be positive. So it suffices to have
\[e'(t) > 2 \binom{d + 1}{2} d! \left(d + (d + 2)\binom{d}{2}\right) \frac{(2d + 6) e(t)}{p^{d^3/2 + d^2/2}},\]
using the fact that $|A| \leq d + (d + 2)\binom{d}{2}$. 
So we take
\[e(t) = \exp \left\{ 16d \left(1 - d!\binom{d + 1}{2} t\right)^{-(d^3/2 + d^2/2 - 1)} \right\}.\]
So for this choice of $e(t)$ we have that $e(t) = n^{o(1)}$ for
\[16t\left(1 - d!\binom{d + 1}{2}t\right)^{-(d^3/2 + d^2/2 - 1)} = o(\log n)\]

So putting this together we arrive at a stopping time at least 
\[\left(\frac{1}{d!\binom{d + 1}{2}} - \omega(\log n^{-2/(d^3  + d^2 - 1)})\right).\]

\end{proof}

The last step is to use the submartingale version of Lemma \ref{lem:HA} to go from positive expectation of $Z_{A, j}(\ell + 1) - Z_{A, j}(\ell)$
to all $Z_{A, j}$'s positive with overwhelming probability. To do this we directly apply the following:
\begin{lemma}\cite[Lemma 6]{BohmanTriangleFree}
Suppose $\eta < N/2$ and $a < \eta m$. If $0 = A_0, A_1, ...$ is an $(\eta, N)$-bounded submartingale then
\[\Pr(A_m \leq -a) \leq \exp\left(\frac{-a^2}{3\eta m N}\right).\]
\end{lemma}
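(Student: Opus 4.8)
The statement is the submartingale form of the Bernstein--Freedman martingale concentration inequality, so my plan is the classical exponential-moment (Chernoff) argument, carried out with a \emph{variance-aware} one-sided bound on the conditional moment generating function rather than the cruder Hoeffding estimate. The key point to keep in mind is that the target denominator $\eta m N$ lives on the Bernstein scale $\eta N$, not on the Hoeffding scale $N^2$: for the \emph{lower} tail of a submartingale the only dangerous increments are the large negative ones, whose magnitude is bounded by the small quantity $\eta$, and that is what lets one reach the sharper scale.

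First I would pass to increments: set $U_i := A_{i-1} - A_i$ for $1 \le i \le m$, so that $U_i \in [-N,\eta]$ pointwise, $\E[U_i \mid \mathcal{F}_{i-1}] \le 0$ by the submartingale hypothesis, and $S_m := \sum_{i=1}^m U_i = -A_m$ since $A_0 = 0$; then $\Pr(A_m \le -a) = \Pr(S_m \ge a) \le e^{-\lambda a}\,\E[e^{\lambda S_m}]$ for every $\lambda > 0$ by Markov's inequality. Next I would establish the one-step bound: from $(U_i + N)(\eta - U_i) \ge 0$ one gets $U_i^2 \le N\eta - (N-\eta)U_i$ pointwise, hence $\E[U_i^2 \mid \mathcal{F}_{i-1}] \le N\eta$ using $N \ge \eta$ and $\E[U_i \mid \mathcal{F}_{i-1}] \le 0$; since $x \mapsto (e^x - 1 - x)/x^2$ is nondecreasing on $\R$, for $\lambda > 0$ and any $u \le \eta$ we have $e^{\lambda u} \le 1 + \lambda u + \eta^{-2}\bigl(e^{\lambda\eta} - 1 - \lambda\eta\bigr)u^2$, and taking conditional expectations gives
\[
\E\!\left[e^{\lambda U_i} \mid \mathcal{F}_{i-1}\right] \;\le\; 1 + \frac{N}{\eta}\bigl(e^{\lambda\eta} - 1 - \lambda\eta\bigr) \;\le\; \exp\!\left\{\frac{N}{\eta}\bigl(e^{\lambda\eta} - 1 - \lambda\eta\bigr)\right\}.
\]
Conditioning successively on $\mathcal{F}_{m-1}, \dots, \mathcal{F}_0$ and iterating then yields $\E[e^{\lambda S_m}] \le \exp\{\tfrac{mN}{\eta}(e^{\lambda\eta} - 1 - \lambda\eta)\}$, so $\Pr(S_m \ge a) \le \exp\{-\lambda a + \tfrac{mN}{\eta}(e^{\lambda\eta} - 1 - \lambda\eta)\}$ for all $\lambda > 0$.

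Finally I would optimize in $\lambda$: taking $\lambda = \eta^{-1}\log\!\bigl(1 + \tfrac{a}{mN}\bigr) > 0$ turns the exponent into $-\tfrac{mN}{\eta}\varphi\!\bigl(\tfrac{a}{mN}\bigr)$ with $\varphi(x) = (1+x)\log(1+x) - x$, and the elementary inequality $\varphi(x) \ge x^2/\bigl(2 + \tfrac{2}{3}x\bigr)$ for $x \ge 0$ bounds this by $-a^2/\bigl(2\eta m N + \tfrac{2}{3}\eta a\bigr)$; the hypotheses $a < \eta m$ and $\eta < N/2$ force $a < mN/2$, whence $\tfrac{2}{3}\eta a < \tfrac{1}{3}\eta m N$, the denominator is at most $3\eta m N$, and the lemma follows.

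The step I expect to be the crux is the one-step moment bound --- specifically, the discipline of \emph{not} invoking Hoeffding's lemma $\E[e^{\lambda U_i}\mid\mathcal{F}_{i-1}] \le e^{\lambda^2 (N+\eta)^2/8}$, which would only produce a denominator of order $m N^2$. It is the variance estimate $\E[U_i^2 \mid \mathcal{F}_{i-1}] \le N\eta$ --- exploiting that one step raises $-A_i$ by at most $\eta$ --- that makes the martingale argument see the correct Bernstein scale, and the constant $3$ in place of the optimal $2$ is precisely the slack needed to absorb $\varphi(x) \ge x^2/(2 + \tfrac23 x)$ under $a < \eta m$.
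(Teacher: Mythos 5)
Your proof is correct, and it follows essentially the same exponential-moment (Bernstein/Freedman) route used in the cited source \cite{BohmanTriangleFree}: bound the conditional variance by $\eta N$ via the one-sided $(\eta,N)$-boundedness, control $\E[e^{\lambda U_i}\mid\mathcal{F}_{i-1}]$ through the monotonicity of $(e^x-1-x)/x^2$, iterate, and optimize. The paper itself does not reprove this lemma but simply cites it, and your full Bennett-style optimization together with $\varphi(x)\ge x^2/(2+\tfrac23x)$ and the hypotheses $a<\eta m$, $\eta<N/2$ cleanly recovers the stated constant $3$.
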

\noindent
In a way analogous to the previous case we apply this to the $Z_{A, j}(\ell)$ sequence with $N = \binom{d + 1}{2}$, $\eta = C/n^{d - 1}$ for $C$ a large constant, and $a = -\Omega(n^{3/4})$ coming from $Z_{A, j}(0) = \Omega(n^{3/4})$.

\section{Upper bound for pseudomanifolds}
The upper bound for the pseudomanifold diameter theorem in \cite{CriadoNewman} relied on the fact that the dual graph of a $d$-pseudomanifold is $(d + 1)$-regular. However, here we improve that upper bound by instead using the following fact.  
\begin{theorem}\label{pmConnectivity}
The dual graph of a strongly connected $d$-dimensional pseudomanifold is $(d + 1)$-connected.
\end{theorem}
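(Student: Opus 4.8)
The plan is to reduce $(d+1)$-connectivity of the dual graph $\mathcal{G}=\mathcal{G}(X)$ to a local statement about facets at graph-distance two, and to prove that statement by exhibiting $d+1$ internally disjoint paths built from the cycles that appear as links of codimension-two faces. Two structural facts will be used throughout. First, $\mathcal{G}$ is $(d+1)$-regular: each facet has $d+1$ ridges, each contained in exactly two facets, and distinct ridges of a facet give distinct neighbours since two facets of a simplicial complex meet in a single face; in particular $|V(\mathcal{G})|\ge d+2$. Second, for every $(d-2)$-face $\tau$ of $X$ the link $\lk_X(\tau)$ is a $1$-dimensional pseudomanifold, hence a disjoint union of cycles, and — identifying facets of $X$ that contain $\tau$ with the edges of $\lk_X(\tau)$ — the $\mathcal{G}$-adjacencies realised by a ridge through $\tau$ are precisely those of these cycles. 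Consequently, whenever two facets share a ridge containing $\tau$ they lie on a common such cycle, and running around it the other way yields an alternative path between them in $\mathcal{G}$ through facets all containing $\tau$; this ``detour around a codimension-two link'' is the only rerouting move used.

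For the reduction I will show it suffices to prove that any two facets $\alpha,\beta$ at distance $2$ in $\mathcal{G}$ admit $d+1$ internally disjoint $\alpha$--$\beta$ paths. Indeed, if $\mathcal{G}$ is not $(d+1)$-connected then, since it is connected (by strong connectivity — the only use of that hypothesis) and has at least $d+2$ vertices, it has a minimum vertex cut $S$ with $|S|\le d$. By minimality every vertex of $S$ has a neighbour in each component of $\mathcal{G}-S$, so choosing $s\in S$ together with neighbours $c_1,c_2$ of $s$ in two distinct components produces facets at distance $2$ in $\mathcal{G}$ separated by the set $S$ of size at most $d$, contradicting Menger's theorem.

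Now fix facets with $\alpha\sim\gamma\sim\beta$ and $\alpha\not\sim\beta$. Every $\alpha$--$\beta$ separator of size $\le d$ must contain $\gamma$ (otherwise the path $\alpha,\gamma,\beta$ survives), so it is enough to build $d$ internally disjoint $\alpha$--$\beta$ paths in $\mathcal{G}-\gamma$. With $\rho=\alpha\cap\gamma$, $\rho'=\gamma\cap\beta$, $\tau_0=\rho\cap\rho'$, one checks that $\tau_0$ is a $(d-2)$-face, that $\alpha=\tau_0\cup\{g',a\}$, $\gamma=\tau_0\cup\{g,g'\}$, $\beta=\tau_0\cup\{g,b\}$ for distinct vertices $a,b,g,g'$ none of which lie in $\tau_0$, and that $\alpha,\gamma,\beta$ occur as three consecutive edges on a cycle of $\lk_X(\tau_0)$; the long arc of that cycle is one $\alpha$--$\beta$ path avoiding $\gamma$. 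For each of the $d-1$ vertices $v\in\tau_0$ I build a further path in two stages: the arc from $\alpha$ to $\gamma^{(v)}$ avoiding $\gamma$ on the cycle of $\lk_X(\rho\setminus v)$, where $\gamma^{(v)}$ is the facet across the ridge $\gamma\setminus v$, followed by the arc from $\gamma^{(v)}$ to $\beta$ avoiding $\gamma$ on the cycle of $\lk_X\big((\tau_0\setminus v)\cup\{g\}\big)$; the two stages meet only at $\gamma^{(v)}$ since a facet containing both $\rho\setminus v$ and $(\tau_0\setminus v)\cup\{g\}$ must contain the ridge $\gamma\setminus v$. Conveniently, the long $\tau_0$-arc and the $d-1$ first-stage arcs are exactly the ``minus last edge'' pieces of the $d$ non-trivial members of the $d+1$ internally disjoint $\alpha$--$\gamma$ paths obtained from the codimension-two construction applied to the adjacent pair $\alpha,\gamma$ (the edge $\alpha\gamma$ together with, for each $v\in\rho$, the detour around $\lk_X(\rho\setminus v)$), so these $d$ arcs are already pairwise internally disjoint.

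The main obstacle is verifying that the $d$ completed paths are pairwise internally disjoint, which amounts to bounding intersections of codimension-two faces. If a facet lies on constituent arcs associated with distinct $v,v'\in\tau_0$ it must contain the union of two distinct $(d-2)$-faces, hence one of $\rho=\tau_0\cup\{g'\}$, $\rho'=\tau_0\cup\{g\}$, or $\gamma=\tau_0\cup\{g,g'\}$; the facets containing these are respectively $\{\alpha,\gamma\}$, $\{\gamma,\beta\}$ and $\{\gamma\}$, and since every arc avoids $\gamma$ and has its endpoints in $\{\alpha,\beta,\gamma^{(v)}\}$, the only vertices two of the paths can share are $\alpha$ and $\beta$. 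Once this bookkeeping is carried out, the distance-two statement holds and the theorem follows.
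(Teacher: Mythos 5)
Your proof is correct, and it takes a genuinely different route from the paper's. The paper argues topologically: Lemma~\ref{pmConnectivityKeyLemma}, proved by a Mayer--Vietoris induction on the number of facets, shows that a complex with at most $d$ facets of dimension at most $d$ has trivial reduced $(d-1)$-homology; the theorem then follows because a minimal disconnecting set $A$ of at most $d$ facets would have to support the nontrivial $(d-1)$-cycle $\partial_d(C)$ for a component $C$ of $X\setminus A$, and filling that cycle inside $A$ produces a smaller disconnecting set, a contradiction. Your argument is purely combinatorial and local: you use the fact that the link of every $(d-2)$-face in a pseudomanifold is $2$-regular, hence a disjoint union of cycles, reduce via Menger's theorem to producing $d+1$ internally disjoint paths between facets $\alpha,\beta$ at dual-graph distance two through a common neighbour $\gamma$, and then route explicit detours around the cycles $\lk_X(\tau_0)$, $\lk_X(\rho\setminus v)$, and $\lk_X(\rho'\setminus v)$, splicing the last two at the facet $\gamma^{(v)}$ opposite $\gamma$ across the ridge $\gamma\setminus v$. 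The internal-disjointness bookkeeping is sound: a facet on two constituent arcs must contain $\rho$, $\rho'$, or all of $\gamma$, hence is one of $\alpha$, $\beta$, or the excluded facet $\gamma$, and the two stages for a fixed $v$ can only meet at $\gamma^{(v)}$. Your approach avoids homology entirely and is close in spirit to the classical Menger-style proofs of Balinski's theorem; the paper's proof is shorter, dualizes Barnette's argument cleanly, and isolates the independently interesting homological Lemma~\ref{pmConnectivityKeyLemma}, whereas yours requires a reduction to the distance-two case and somewhat more casework but gives an explicit witness for the connectivity.
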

\noindent
Now, a $(d + 1)$-connected graph on $n$ vertices has diameter at most 
$ (n-2)/(d + 1) + 1$. This simple observation appears as Theorem 1 of \cite{CaccettaSmyth}. The proof is as follows. Let $v$, $u$ in a $(d + 1)$-connected graph $G$ to be two vertices whose distance is $\diam(G)$. Then for each $1 \leq i \leq \diam(G)-1$, let $V_i$ denote the set of vertices at distance $i$ from $v$. Each of $V_1$, ..., $V_{\diam(G)-1}$ is nonempty and has at least $d + 1$ vertices, and more over they are pairwise disjoint subsets of the vertex set of $G \setminus \{u,v\}$. So $(\diam(G)-1)(d + 1) +2 \leq n$.

The fact that the graph of a $d$-pseudomanifold is $(d + 1)$-connected isn't too surprising either, although we couldn't find such a result stated in the literature. It's a natural extension of the simple-polytope case of Balinski's theorem that the graph of a $(d + 1)$-polytope is $(d + 1)$-connected. The closest result to vertex-connectivity of graphs associated to $d$-pseudomanifolds is apparently a result of Barnette \cite{Barnette} that the graph (i.e. the 1-skeleton, not the dual graph) of a $d$-pseuodmanifold is $(d + 1)$-connected. Theorem \ref{pmConnectivity} is then a dual to this result. We give a self-contained topological proof of Theorem \ref{pmConnectivity}.

The key to the proof of Theorem \ref{pmConnectivity} is the  Lemma \ref{pmConnectivityKeyLemma} below. All homology computations are with $\Z/2\Z$ coefficients. For readers unfamiliar with homology we refer to a standard reference such as \cite{Hatcher} and provide a sketch of why the argument works in dimension 2. 

Suppose we have a strongly connected 2-dimensional pseudomanifold $X$ and we have a set $\Sigma$ of at most two triangles so that $X \setminus \Sigma$ is no longer strongly connected. Let $C$ be a strongly connected component of $X \setminus \Sigma$. Then $C$ is a strongly connected pseudomanifold with boundary, i.e. a strongly connected 2-complex with every edge contained in at most 2 triangles. The boundary of $C$ is a graph $H$ in which all vertices have even degree. Moreover $H$ is also a subgraph of $\Sigma$. If $H$ is the boundary of a pseudomanifold with boundary $\Sigma' \subseteq \Sigma$ then adding $\Sigma'$ back in will still not create a path in the dual graph from $C$ to its complement, so $\Sigma \setminus \Sigma'$ is a smaller disconnecting set. On the other hand however if $\Sigma$ is a set of at most two triangle then it is easy to see that any even subgraph of $\Sigma$ is the boundary of a collection of triangles $\Sigma'$.  In the language of homology if $\Sigma$, a set of at most $d$ faces removed from a $d$-pseudomanifold, disconnects the dual graph then it is necessary that $\Sigma$ supports a nontrivial $(d-1)$-dimensional cycle, but the following lemma shows that that is not possible. Here we use ``facet" to refer to a maximal face of a simplicial complex.

\begin{lemma}\label{pmConnectivityKeyLemma}
If $X$ is a simplicial complex of dimension at most $d$ with at most $d$ facets (not necessarily all of the same dimension) then $\tilde{H}_{d - 1}(X) = 0$. 
\end{lemma}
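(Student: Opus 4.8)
I would prove Lemma~\ref{pmConnectivityKeyLemma} by induction on $d$, peeling off one facet at a time via a Mayer--Vietoris argument. The base case $d = 1$ is immediate: a simplicial complex of dimension at most $1$ with at most one facet is either a single vertex, a single edge (with its two endpoints), or a subcomplex thereof — in any case contractible or empty, so $\tilde H_0(X) = 0$. (If the "at most one facet" includes the degenerate case of a single isolated vertex viewed as a facet, it is still connected and nonempty.)

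**Inductive step.** Suppose the claim holds for all dimensions below $d$, and let $X$ have dimension at most $d$ with facets $F_1, \dots, F_m$, $m \le d$. If some $F_i$ has dimension strictly less than $d$, or if $m < d$, we are in a strictly easier case, so the real work is when $m = d$ and every $F_i$ is a genuine $d$-simplex. Write $X = X' \cup F_d$ where $X' = F_1 \cup \dots \cup F_{d-1}$ is the subcomplex generated by the first $d-1$ facets, and $F_d$ denotes the closed simplex (all its faces). Then $X' \cap F_d$ is a subcomplex of $\partial F_d$, so it has dimension at most $d - 1$; moreover it is generated by at most $d - 1$ faces, namely the intersections $F_i \cap F_d$ for $i < d$ (each of which is a face of $F_d$ of dimension $\le d-1$, hence a simplex, hence a single facet of $X' \cap F_d$ — or several, but in any case $X' \cap F_d$ is covered by at most $d-1$ simplices). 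The Mayer--Vietoris sequence gives
\[
\tilde H_{d-1}(X' \cap F_d) \to \tilde H_{d-1}(X') \oplus \tilde H_{d-1}(F_d) \to \tilde H_{d-1}(X) \to \tilde H_{d-2}(X' \cap F_d).
\]
Now $\tilde H_{d-1}(F_d) = 0$ since a closed simplex is contractible; $\tilde H_{d-1}(X') = 0$ by the inductive hypothesis applied to $X'$ (dimension $\le d$, but only $d - 1 \le d-1$ facets, so actually we want the induction phrased to cover "$\le d$ dimensional with $\le d-1$ facets" — see the remark below); and $\tilde H_{d-2}(X' \cap F_d) = 0$ by the inductive hypothesis applied to $X' \cap F_d$, which has dimension $\le d-1$ and is generated by $\le d-1$ facets. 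Hence $\tilde H_{d-1}(X) = 0$.

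**Cleaning up the induction hypothesis.** The subtlety is that the statement must be strengthened slightly so the induction closes: prove by induction on the number of facets $k$ that any simplicial complex of dimension $\le k$ (equivalently $\le d$ for any $d \ge k$) with at most $k$ facets has vanishing $\tilde H_{k-1}$, and in fact $\tilde H_j = 0$ for all $j \ge k - 1$. Then in the Mayer--Vietoris step both $X'$ (with $\le d-1$ facets) and $X' \cap F_d$ (with $\le d-1$ facets, dimension $\le d-1$) are handled by the hypothesis at level $k = d-1$. One should check the edge cases where $X' \cap F_d$ is empty (then $\tilde H_{d-2}$ of the empty complex is $0$ for $d \ge 3$, and for $d = 2$ one checks directly that $X$ is a disjoint union of two triangles, whose $\tilde H_1$ is still $0$) and where $X'$ or $F_d$ is empty.

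**Main obstacle.** I expect the only real friction to be bookkeeping: making sure the "at most $d$ facets" bound is preserved when passing to the intersection $X' \cap F_d$ — this needs the observation that a subcomplex of the boundary of a single simplex, obtained by intersecting with $d-1$ other simplices, is itself generated by at most $d-1$ simplices — and correctly tracking reduced homology through the low-dimensional and empty-complex corner cases so that the Mayer--Vietoris connecting maps land where claimed. None of this is deep, but it is where a careless write-up would go wrong.
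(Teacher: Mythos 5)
Your proof is correct and follows essentially the same route as the paper: a double induction (on $d$ and on the number of facets) that peels off one facet at a time via Mayer--Vietoris, using that the intersection of the peeled facet with the rest has at most $d-1$ facets, each of dimension at most $d-1$. The paper's write-up is terser and leaves the strengthening of the induction hypothesis and the empty-intersection edge cases implicit, but the underlying argument is identical.
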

\begin{proof}
By induction. If $d = 1$ then by the assumptions $X$ is either a single vertex or a single edge; in either case it's path connected. Now we suppose that $d \geq 2$ and induct on the number of facets. If $X$ has only 1 facet then $X$ is a simplex and has vanishing homology in all dimensions. Now suppose that $X$ is generated by $\{\sigma_1, \sigma_2, ..., \sigma_{\ell}, \sigma_{\ell + 1}\}$ with $\ell + 1 \leq d$. Letting $X_{\ell}$ denote the complex generated by $\{\sigma_1, ..., \sigma_{\ell}\}$, by the Mayer--Vietoris Sequence we have an exact sequence of homology groups:
\[H_{d - 1}(X_{\ell}) \rightarrow H_{d - 1}(X_{\ell + 1}) \rightarrow \tilde{H}_{d - 2}(X_{\ell} \cap \sigma_{\ell + 1})\]
where $X_{\ell} \cap \sigma_{\ell + 1}$ is the complex obtained by taking the downward closure of $\{\sigma_1 \cap \sigma_{\ell + 1}, \sigma_2 \cap \sigma_{\ell + 1}, ..., \sigma_{\ell} \cap \sigma_{\ell + 1}\}$. By induction on $\ell$, $H_{d - 1}(X_{\ell}) = 0$. Now $X_{\ell} \cap \sigma_{\ell + 1}$ is generated by at most $\ell \leq d - 1$ facets, all of dimension at most $d - 1$, so by induction on $d$, $\tilde{H}_{d - 2}(X_{\ell} \cap \sigma_{\ell + 1}) = 0$. Therefore we have a surjective map from the trivial group onto $H_{d - 1}(X_{\ell + 1})$ so we complete the proof. 
\end{proof}
Note that the lemma is best possible, if we take a single $d$-simplex boundary and have $\sigma_1, ..., \sigma_{d + 1}$ each be a cone of the starting simplex boundary with a unique cone point and base a distinct facet of the central $d$-simplex boundary then the resulting complex has homology in dimension $d - 1$. 
\begin{proof}[Proof of Theorem \ref{pmConnectivity}]
Let $X$ be a $d$-dimensional pseudomanifold and suppose that $\sigma_1, ..., \sigma_{\ell}$ is a minimal collection of at most $d$ facets of $X$ so that $X \setminus \{\sigma_1,  ... , \sigma_{\ell}\}$ is not strongly connected. Let $A$ be the complex generated by $\{\sigma_1, ... , \sigma_{\ell}\}$. By Lemma \ref{pmConnectivityKeyLemma} then $\tilde{H}_{d - 1}(A) = 0$. On the other hand though $\partial_d(X \setminus A) = \partial_d(A)$ where $\partial_d$ refers to the $d$th boundary matrix of $X$ with $\Z/2\Z$ entries, and by $\partial_d(Y)$ we mean $\partial_d \textbf{1}_Y$ for $\textbf{1}_Y$ the indicator vector for the $d$-faces of $Y$.  This follows simply because $X$ being a pseudomanifold implies that $\partial_d(X) = 0$. Now if we let $C$ be a $d$-dimensional strongly connected component of $X \setminus A$ then $\partial_d(C) \subseteq A$ since the strongly connected component partition both $d$-dimensional faces \emph{and} nonmaximal $(d - 1)$-faces (so the boundary matrix has a natural block structure respecting this partitioning). Now $\partial_d(C)$ is a $(d - 1)$-cycle in $A$. Since $\tilde{H}_{d - 1}(A) = 0$, $\partial_d(C)$ is the boundary of some collection $\Sigma$ of the $\{\sigma_1, ..., \sigma_{\ell}\}$. Thus $\Sigma$ is a $d$-pseudomanifold with boundary, but then the dual graph of $(X \setminus A) \cup \Sigma$ cannot be strongly connected since the vertices corresponding the facets in $\Sigma$ are only adjacent to other vertices in $\Sigma$ and vertices in the dual graph of $C$. Thus deleting $\{\sigma_1, ..., \sigma_{\ell}\} \setminus \Sigma$ also disconnects the dual graph of $X$ contradicting minimality. 
\end{proof}
\begin{proof}[Proof of upper bound in Theorem \ref{pmdiametertheorem}]
By Theorem 1 of \cite{CaccettaSmyth}, a $K$-connected graph on $N$ vertices has diameter at most $(n-2)/K+1$. Thus by Theorem \ref{pmConnectivity}, the diameter of a pseudomanifold $X$ is at most $f_d(X)/(d + 1) + 1$. On the other hand, by taking degree sums of $(d - 1)$-faces we have
\[2f_{d - 1}(X) = (d + 1)f_d(X).\]
Trivially $X$ has at most $\binom{n}{d} \leq \frac{n^d}{d!}$ $(d - 1)$-faces so
\[\diam(X) \leq \frac{f_d(X) }{d + 1} +1 = \frac{2 f_{d - 1}(X)}{(d + 1)^2} + 1 \leq \frac{2n^d}{(d + 1)(d + 1)!}\]
for $n$ sufficiently large.
\end{proof}
\section{Closing remarks}
Our main results settle the first order asymptotics of $H_s(n, d)$ and $H_{pm}(n, d)$ for fixed $d$, quantities that were previously studied in \cite{SantosDiamterSurvey, CriadoSantos, CriadoNewman}. But a substantial gap remains in the lower order asymptotics. We could be a bit more precise in the upper bound; for example,  in the simplicial complex case we have
\[H_s(n, d) \leq \frac{1}{d!}\binom{n}{d} \leq \frac{n^d - \Omega(n^{d - 1})}{d!}.\]
There is still a gap between $n^{d - 1}$ 
and $n^d/(\log n)^{1/d^2}$ in the second second order terms from the upper and lower bounds, respectively. While a significant improvement in either bound would be interesting, we do not believe that the lower bound is the correct asymptotic value of $H_s(n, d)$. Our analysis of the process that we introduce here is relatively crude because it is based on bounds 
on the probability that an arbitrary $(d-1)$-face is closed and therefore it does not take advantage of form of the variables we are tracking. So it is likely that the lower bound on the number of steps in the process given by Lemma~\ref{KeyDiffEqLemma} is not correct in the second order term. If one takes the heuristic that the complex of `available' $(d-1)$-faces after $i$ steps of the process is approximately a Linial-Meshulam random complex with each potential $(k-1)$-face appearing with probability $p$ to its furthest limit, then one would not expect the
process to terminate until $ np^d = \Theta( \log n) $. This would then give a lower bound on $ H_s(n,d)$ of the form
\[  \frac{n^d}{d \cdot d!} - O\left( (\log n)^{1/d} n^{d - \frac{1}{d}}   \right).  \]
The proof of such a statement would likely be intricate if using currently available techniques. It seems that one would need to develop sophisticated self-corrected estimates (as in \cite{BohmanFriezeLubetzky, BohmanKeevashr3t, FizGriffithsMorrisr3t}).

One could also generalize what we've done to more general classes of complexes or set systems. In the case of simplicial complexes we map $SC_d(N)$ into the simplex on $n$ vertices subject to the rule that the map is injective on codimension-1 faces. In the case of pseudomanifolds we map $SC_{d}(N)$ into the simplex on $n$ vertices subject to the rule that the map is injective on codimension-2 faces. This naturally extends to trying to construct maps that are injective on codimension-$k$ faces, although it isn't clear from a geometric or topologicial perspective how interesting such a thing would be. 
\begingroup
\bibliography{ResearchBibliography}
\bibliographystyle{amsplain}
\endgroup
\end{document}